\documentclass[11pt]{amsart}

\usepackage[lmargin=1in,rmargin=1in,tmargin=1in,bmargin=1in]{geometry}
\usepackage{amsthm,amsfonts,amssymb,amsmath}
\usepackage{graphicx}
\usepackage{tikz}
\usepackage{tikz-cd}
\usepackage{mathtools,leftindex}
\usetikzlibrary{positioning,arrows}
\usepackage{enumitem}
\usepackage{subcaption}
\usepackage{thmtools}
\usepackage{thm-restate}
\usepackage[
  colorlinks=true,
  linkcolor=blue,
  citecolor=blue,
  hypertexnames=false
]{hyperref}

\usetikzlibrary{calc}

\newcommand{\bZ}{\mathbb{Z}}

\newcommand{\cT}{\mathcal{T}}

\newcommand{\tv}{\tilde{v}}

\newcommand{\hv}{\widehat{v}}
\newcommand{\hu}{\widehat{u}}
\newcommand{\he}{\widehat{e}}

\newcommand{\hX}{\widehat{X}}
\newcommand{\hY}{\widehat{Y}}
\newcommand{\hG}{\widehat{G}}
\newcommand{\hV}{\widehat{V}}
\newcommand{\hgamma}{\widehat{\gamma}}
\newcommand{\hGamma}{\widehat{\Gamma}}
\newcommand{\hTheta}{\widehat{\Theta}}

\newcommand{\hPhi}{\widehat{\Phi}}

\newcommand{\hPsi}{\widehat{\Psi}}

\newcommand{\para}{\parallel}
\newcommand{\npara}{\nparallel}

\newcommand{\dr}{\overrightarrow}
\newcommand{\dl}{\overleftarrow}

\newcommand{\RNum}[1]{\uppercase\expandafter{\romannumeral #1\relax}}

\newcommand{\CAT}{\operatorname{CAT}}

\newcommand{\Aut}{\operatorname{Aut}}

\newcommand{\lk}{\operatorname{lk}}

\newcommand{\SL}{\operatorname{SL}}

\newcommand{\stab}{\operatorname{Stab}}

\newcommand{\ima}{\operatorname{im}}

\newtheorem{thm}{Theorem}[section]

\newtheorem{cor}[thm]{Corollary}
\newtheorem{lem}[thm]{Lemma}

\theoremstyle{definition}
\newtheorem{defn}[thm]{Definition}
\newtheorem{rmk}[thm]{Remark}

\numberwithin{equation}{section}

\begin{document}

\title{Virtual specialness of the double}
\author{Changqian Li}

\address{Department of Mathematics, The Ohio State University, Columbus, OH, 43210, U.S.}
\email{li.13132@osu.edu}

\maketitle

\begin{abstract}
Let $G$ be a virtually compact special Gromov-hyperbolic group. 
We prove that the double $G *_H G$ along a quasiconvex subgroup $H$ is virtually compact special. 
More generally, we show that if a finite graph of groups has constant vertex groups, with each vertex group virtually compact special Gromov-hyperbolic and each edge group quasiconvex in its adjacent vertex groups, then its fundamental group is virtually compact special.
\end{abstract}

\section{Introduction}
A group is \emph{(compact) special} if it has a finite index subgroup that is the fundamental group of a (compact) cube complex satisfying certain combinatorial hyperplane conditions (see Definition~\ref{def of special}). A group is \emph{virtually (compact) special} if it admits a finite index (compact) special group.
The theory of virtually special groups was introduced by Haglund and Wise in \cite{Haglund-Wise2008}. Since then, it has led to many important developments, among which the most notable is Agol's proof of the Virtual Haken Conjecture \cite{Agol2013VirtualHaken}.

Virtually compact special groups satisfy strong algebraic properties. In particular, since compact special groups embed in finitely generated right-angled Artin groups and hence in $\SL_n(\bZ)$, virtually compact special groups are linear and residually finite. 

In \cite{doubaTsouvalas2026}, Douba and Tsouvalas investigated the linearity
of amalgams of subgroups of algebraic groups. The following result from their
paper is particularly relevant to the present work.

\begin{thm}[cf. {\cite[Theorem 1.4]{doubaTsouvalas2026}}]
Let $G$ be a virtually compact special Gromov-hyperbolic group, and let
$H < G$ be a quasiconvex subgroup. Then the double $G *_H G$ is linear.
\end{thm}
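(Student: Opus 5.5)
The plan is to deduce linearity from the stronger conclusion that $G *_H G$ is virtually compact special. Compact special groups embed in finitely generated right-angled Artin groups, which are linear over $\bZ$. Linearity also passes from a finite index subgroup to the whole group by induction of representations. So it suffices to find a finite index subgroup of $G *_H G$ that is compact special.

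First I reduce to the torsion-free, special, separable setting. Pass to a finite index subgroup $G' < G$ with $G' = \pi_1 X$ for a compact special cube complex $X$. Since $G$ is hyperbolic and virtually compact special, quasiconvex subgroups are separable (Haglund--Wise). The retraction $G *_H G \to G$ that identifies the two factors has kernel meeting each factor trivially. Pulling back $G'$ gives a finite index subgroup of $G *_H G$, which by Bass--Serre theory is the fundamental group of a finite graph of groups. Its vertex groups are conjugates of $G'$ and its edge groups are conjugates of $H\cap G'$. Because each vertex group appears on both sides of the double through the same subgroup, this finite index subgroup is a graph of groups with constant vertex groups, meaning all vertex and edge maps are compatible with a retraction onto a single vertex group.

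Next, realise this group as the fundamental group of a compact nonpositively curved cube complex. By Haglund--Wise, $H\cap G'$ is quasiconvex in $G'$, so there is a compact special cube complex $Y$ with a local isometry $Y \to X$ representing $H\cap G'$; I would use the canonical completion and retraction to arrange that $Y$ has a convex core. Build the graph of spaces $Z$: one copy of $X$ per vertex, and $Y\times[0,1]$ per edge, glued along the local isometries. Then $Z$ is a compact nonpositively curved cube complex whose fundamental group is the finite index subgroup from the previous step. The retraction to $G'$ comes from a combinatorial map $Z \to X$ that collapses the interval factors.

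The main obstacle is showing that $Z$ is virtually special. I would try two routes. The first is to use the retraction $Z\to X$: hyperplanes of $Z$ are either preimages of hyperplanes of $X$ or the edge hyperplanes $Y\times\{1/2\}$. Pathologies among hyperplanes of the first kind are pulled back from $X$, apart from self-osculation issues created by the gluing. Edge hyperplanes are two-sided and embedded, and they cannot inter-osculate or self-osculate once the double cosets of the edge groups are separated. The needed separability is that of $H\cap G'$ and of double cosets $(H\cap G')g(H\cap G')$, which holds for quasiconvex subgroups of virtually compact special hyperbolic groups (Minasyan's result). Using it, I would pass to a finite cover of $Z$, constructed as a graph of finite covers of $X$ glued compatibly using the constancy of the vertex groups, in which all hyperplane pathologies vanish. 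The alternative route is that $G*_H G$ is hyperbolic relative to nothing in general, because $H$ need not be malnormal. This rules out directly citing Wise's quasiconvex hierarchy, which is why the explicit finite cover construction is the crux.
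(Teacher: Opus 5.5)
The paper itself only cites this theorem. Its main result, Theorem~\ref{special of double}, is exactly the alternative proof you outline, and your Steps 1--2 match it: the retraction $G*_HG\to G$, pulling back a compact special $G'$, and the graph of cube complexes $Z$ with constant vertex spaces and collapsing retraction $r\colon Z\to X$ correspond to Lemmas~\ref{retraction of group}, \ref{retraction of space} and Corollary~\ref{cor of Gromov-hyperbolic}. One small correction: the compact $Y$ comes from the convex core of a quasiconvex subgroup \cite[Proposition 7.2]{haglund-wise2012}, not from canonical completion.

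The gap is in Step 3, which you leave as a plan, and the separability you name there is not the one that is needed. Separability of $(H\cap G')g(H\cap G')$ plays no role. Embedding the edge spaces needs only separability of $H\cap G'$. What must be excluded is inter-osculation between the image of $Y$ and hyperplanes of the vertex space (Lemma~\ref{lemma of vertical hyperplane}(3)), and that amounts to separating double cosets $(H\cap G')\,g\,\stab(W)$ with $W$ ranging over hyperplanes of $X$. This must also hold for \emph{every} elevation of $Y\to X$ to a \emph{regular} finite cover $\hX\to X$, because the cover of $Z$ pulled back from $\hX$ along $r$ has arbitrary elevations of $Y$ as its edge spaces. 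Hyperplane stabilizers are quasiconvex, so Minasyan's theorem does supply this; the paper simply invokes Lemma~\ref{lemma of interosculation}.

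The second missing ingredient is the reason pathologies then vanish. You need that $r$ sends parallel directed edges of $Z$ to parallel directed edges of $X$, even when the parallelism chain passes through several vertex spaces (Lemma~\ref{retraction of space}(2)). This sends every self-crossing, one-sidedness, direct self-osculation or inter-osculation among non-vertical hyperplanes to the same pathology in the special vertex space. In particular there are no extra ``self-osculation issues created by the gluing'' for those hyperplanes.

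It also handles the delicate mixed case: a hyperplane may cross $Y\times\{1/2\}$ at one end of $Y\times[0,1]$ and osculate with it at the other end. Retracting onto $X_{\iota(e)}$ moves both events into one vertex space and yields an inter-osculation of $\phi_e^-(Y)$ with a single hyperplane there; this is the ``flip'' in Corollary~\ref{cor of special}. Without parallelism preservation, a hyperplane of the cover can meet a vertex space in several distinct hyperplanes. Then non-inter-osculation of each attaching image separately would not rule out the mixed case. So ``once the double cosets are separated, all pathologies vanish'' is exactly the step that still has to be proved.
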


In the same paper, Douba and Tsouvalas suggest that the double $G *_H G$
should remain virtually compact special, which would give an alternative proof
of linearity. Our main result confirms this expectation.

\begin{thm}\label{special of double}
Let $G$ be a virtually compact special Gromov-hyperbolic group, and let $H<G$ be a quasiconvex subgroup. Then the double $G*_HG$ is virtually compact special.
\end{thm}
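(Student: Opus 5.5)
The plan is to reduce to a setting where Wise's Malnormal Special Quotient Theorem and the Quasiconvex Hierarchy Theorem apply. The strategy is to show that the double is hyperbolic relative to something manageable, or to exhibit it virtually as a hyperbolic group with a quasiconvex malnormal hierarchy. The direct obstruction is that $H$ need not be malnormal in $G$, and in that case $G*_HG$ need not be hyperbolic at all; for instance, $H$ could contain a $\bZ$ whose commensurator is large. So the first step is to pass to finite index and use separability to make $H$ "almost malnormal".

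Since $G$ is virtually compact special and hyperbolic, quasiconvex subgroups are separable, by Haglund--Wise. Moreover, double cosets $HgH$ are separable, by Minasyan. Using this, I will choose a finite-index normal subgroup $G_0 \lhd G$ with $G_0$ compact special, and set $H_0 = H\cap G_0$. The double $G*_HG$ contains the finite-index subgroup given as the kernel of the natural map $G*_HG\to G\to G/G_0$, where the first map folds the two copies together. This kernel is the fundamental group of a finite graph of groups. Its vertex groups are copies of $G_0$, and its edge groups are conjugates of $H_0$-type subgroups $G_0\cap H^g$. Crucially, the graph has the same vertex group on each side of every edge, with identical inclusions. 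This is exactly the "constant vertex groups" setting from the abstract, and I plan to prove that general statement.

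For a constant-vertex-group graph of groups, the key structural observation is a retraction. There is a homomorphism from $\pi_1$ of the graph of groups onto $G_0 * F$, where $F$ is the free group $\pi_1$ of the underlying graph, or more simply a retraction onto $G_0$ that kills the stable letters. This retraction is injective on each vertex group. I will realize this geometrically. Take a compact special cube complex $X$ with $\pi_1X=G_0$. Take compact cores $Y_e\to X$ for the edge groups, which are quasiconvex and hence have finite-degree special local-isometry cores. Form the graph of spaces whose vertex spaces are copies of $X$ and whose edge spaces are $Y_e\times[0,1]$, attached by the same local isometry at both ends. The resulting nonpositively curved cube complex $Z$ maps to $X\times\Gamma$, where $\Gamma$ is the underlying graph, and the map is a local isometry when $Y_e\to X$ is. Indeed, $Z$ is the subcomplex of $X\times\Gamma$ obtained by replacing each edge $X\times e$ with $Y_e\times e$. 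Since $X\times\Gamma$ is special, and the restriction of a local isometry into a special complex yields a virtually special complex (Haglund--Wise: a compact complex with a local isometry to a special complex is special), $Z$ is special. The hyperplane pathologies of $Z$ pull back from those of $X\times\Gamma$.

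The main obstacle is that the edge maps must be genuine local isometries of cube complexes into the same $X$, from which $Z\to X\times\Gamma$ is a local isometry. The edge groups $H_0^g\cap G_0$ are quasiconvex, and Haglund's theorem together with Sageev's construction gives a compact convex core in the universal cover of $X$. Such a core may only exist after cubulating $G_0$ with enough hyperplanes, or after replacing $X$ by a finite cover. I would first try the Haglund--Wise result that quasiconvex subgroups of compact special hyperbolic groups are convex cocompact in some compact special cube complex realizing $G_0$. Finitely many such subgroups can be made simultaneously convex cocompact by adding hyperplanes and using the product/induced cubulation. Once this is arranged, the local-isometry argument is formal.
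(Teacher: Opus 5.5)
\paragraph{The gap: $Z$ need not be special.}
Your reduction matches the paper's. You fold $G*_HG\to G$ (the retraction of Lemma~\ref{retraction of group}) and take the preimage of a compact special finite-index subgroup, which gives a finite graph of groups with constant vertex groups. You then realize it with vertex spaces copies of a compact special $X$ and edge spaces compact cores $f_e\colon Y_e\to X$, attached by the same map at both ends. The opening material on malnormality, Minasyan and special quotients is never used. No re-cubulation is needed for the cores either, since quasiconvex subgroups of a cocompactly cubulated hyperbolic group are already convex cocompact.

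The gap is the claim that the total space $Z$ is special because $Z\to X\times\Gamma$ is a local isometry. At a vertex of $f_e(Y_e)$, injectivity of the link map forces $f_e$ to be injective on vertices. Fullness forces $\lk(y)\to\lk(f_e(y))$ to be onto. So the map is a local isometry only when every $f_e$ is an isomorphism. Both failures produce real pathologies:
\begin{enumerate}
\item Suppose $f_e(y)=f_e(y')$ with $y\neq y'$. The horizontal edges $\{y\}\times[0,1]$ and $\{y'\}\times[0,1]$ leave the same vertex and are parallel as directed edges, so the vertical hyperplane of $Y_e\times[0,1]$ directly self-osculates. For example, take $G=G_0=F_2$, $X$ the rose on $a,b$, $H=\langle ab\rangle$ and $Y_e$ a circle immersed along $ab$. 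Then your $Z$ is not special.
\item Suppose $f_e$ embeds, so $Z$ really is a subcomplex of $X\times\Gamma$. An edge $c$ at a vertex of $f_e(Y_e)$ but not in $f_e(Y_e)$ spans the square $c\times e$ in $X\times\Gamma$ but not in $Z$. If the hyperplane of $X$ dual to $c$ also meets $f_e(Y_e)$, the vertical hyperplane inter-osculates with a hyperplane of a vertex space.
\end{enumerate}
So pathologies of $Z$ do not pull back from $X\times\Gamma$.

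\paragraph{The missing step.}
You need the step the paper treats as the heart of the argument. Pass to a finite cover $\hX\to X$ in which every elevation of the finitely many $Y_e$ is embedded and does not inter-osculate with any hyperplane of $\hX$ (Lemma~\ref{lemma of interosculation}, due to Shepherd). Then pull this cover back along the retraction $r\colon Z\to X$ of Lemma~\ref{retraction of space}. Because the vertex spaces are constant, the pullback is a finite cover of $Z$. It is again a graph of cube complexes, with every vertex space a copy of $\hX$ and with these embedded, non-inter-osculating elevations as attaching maps.

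Specialness must then be checked directly, as in Corollary~\ref{cor of special}, not via a local isometry:
\begin{itemize}
\item For non-vertical hyperplanes, self-crossing, one-sidedness and direct self-osculation are excluded by retracting to a vertex space.
\item Vertical hyperplanes are automatically two-sided and non-self-crossing, and embeddedness of the attaching maps excludes their direct self-osculation.
\item The only remaining pathology is inter-osculation of a vertical hyperplane through the non-full squares $c\times e$. This is exactly what the non-inter-osculation condition rules out.
\end{itemize}
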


In general, the quasiconvexity assumption on $H$ cannot simply be omitted. 
For example, let $H$ be a subgroup of a virtually compact special Gromov-hyperbolic group $G$ such that $H$ is not separable in $G$ (see \cite[Example~10.3]{Haglund-Wise2008} for an example of construction). 
Then the double $G *_H G$ is not residually finite, and hence is not virtually special. 
Even if $H$ is separable but not quasiconvex in $G$, the double $G *_H G$ may fail to be virtually compact special. However, it is not clear whether it must still be virtually special.

More generally, we say that a graph of groups has \emph{constant vertex groups} if there exists a fixed group $V$ such that all vertex groups are isomorphic to $V$, and for every edge, the two monomorphisms from the edge group to the adjacent vertex groups are compatible with isomorphisms (see Definition \ref{def of constant}). In particular, the double $G *_H G$ is the fundamental group of a graph of groups with constant vertex groups, whose underlying graph consists of a single edge joining two vertices.

Theorem \ref{special of double} is a special case of the following more general result.
\begin{thm}\label{general result}
Let $G$ be the fundamental group of a finite graph of groups with constant vertex groups. Suppose that the vertex groups are virtually compact special Gromov-hyperbolic, and that each edge group is quasiconvex in the adjacent vertex groups. Then $G$ is virtually compact special.
\end{thm}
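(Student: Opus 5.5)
We want to show the following: if $G=\pi_1(\mathcal{G})$ for a finite graph of groups with constant vertex group $V$, where $V$ is virtually compact special hyperbolic and the edge groups are quasiconvex, then $G$ is virtually compact special. The plan is to pass to a finite-index subgroup that is the fundamental group of a graph of groups whose vertex groups are special and whose gluings are ``retraction-compatible''. From there we build a nonpositively curved cube complex by gluing copies of a compact special cube complex along locally convex subcomplexes, and check the special conditions.

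\textbf{Step 1 (reduction to a special cube complex).} By Haglund--Wise, there is a finite-index torsion-free subgroup $V_0<V$ such that $V_0=\pi_1 X$ with $X$ a compact special cube complex. Each edge group $E$ is quasiconvex in $V$, so $E\cap V_0$ is quasiconvex in $V_0$. By quasiconvex separability and Haglund--Wise's canonical completion and retraction, we may pass to a further finite cover $\hX\to X$ in which each relevant quasiconvex subgroup is represented by a compact locally convex subcomplex. By the Haglund--Wise/Sageev convex core construction, such a subcomplex is the image of a local isometry $Y_E\to\hX$ that embeds. We also want these subcomplexes to have embedded, non-self-osculating hyperplane behaviour, so that they are themselves special.

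The subgroup $\ker(G\to\text{finite})$ comes from the following map. Since all vertex groups are identified with $V$ compatibly, there is a natural ``folding'' homomorphism $\rho\colon G\to V$, which is the identity on each vertex group and sends stable letters to $1$. This requires the compatibility condition of Definition~\ref{def of constant}: the two edge maps agree after the identifications with $V$. Composing with $V\to V/N$, where $N\le V_0$ is a finite-index normal subgroup corresponding to a regular special cover, gives a finite quotient. Its kernel $G_0$ acts on the Bass--Serre tree with vertex stabilizers conjugates of $N$ and edge stabilizers $E^g\cap N$. Thus $G_0$ splits as a finite graph of groups whose vertex groups are all $N=\pi_1\hX$, with edge groups represented by the locally convex subcomplexes $Y_E\subset \hX$, and the two attaching maps of each edge are the same inclusion.

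\textbf{Step 2 (the complex).} We form the graph of spaces $Z$ with one copy of $\hX$ per vertex and $Y_E\times[0,1]$ per edge, with both ends attached by the same inclusion $Y_E\hookrightarrow\hX$. Because the attaching maps are local isometries, $Z$ is nonpositively curved and compact, with $\pi_1Z=G_0$. The crucial point is that the folding map gives a cubical map $Z\to\hX\times\Gamma$, where $\Gamma$ is the underlying (finite) graph. This map is a local isometry, since it is an embedding on each vertex space and on each edge space $Y_E\times[0,1]$. A compact nonpositively curved complex admitting a local isometry to a special complex is special. Here $\hX\times\Gamma$ is special: it is a product of a special complex and a graph, and after possibly subdividing $\Gamma$ to kill self-loops, its hyperplanes are products of hyperplanes of $\hX$ with $\Gamma$, or $\hX\times\{\text{midpoints}\}$, and none of these self-intersect, one-sidedly embed, or osculate. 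Hence $Z$ is special, and $G$ is virtually compact special.

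\textbf{Main obstacle.} The step I expect to be hardest is Step 1: ensuring that the edge-group subcomplexes $Y_E$ genuinely embed in a single common cover $\hX$ via local isometries, and that the resulting splitting of $G_0$ has the claimed form, namely all edge maps being the same inclusion rather than conjugated inclusions differing by elements of $N$. The conjugates arising from different $G_0$-orbits of edges must be handled by choosing basepoints, using lifts of $Y_E$ at different basepoints in $\hX$. The local isometry to $\hX\times\Gamma$ still works, because the edge map to $\hX$ is still a single local isometry at each end, being the same lift. I would first try to use the quasiconvex subgroup separability from Haglund--Wise to make each $Y_E\to X$ lift to an embedding in a common finite cover, and then take the regular cover.
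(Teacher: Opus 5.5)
\textbf{There is a genuine gap in Step 2.} The map $Z\to\hX\times\hGamma$ is locally injective (in fact it embeds $Z$ as a subcomplex), but it is not a local isometry, because the image links are not full. Take a vertex $x\in j(Y_E)\subset\hX_{\hu}$ and an edge $f$ of $\hX$ at $x$ that is not in $j(Y_E)$. In $\lk(x,\hu)$ the vertex given by $f$ and the vertex given by the half-edge $\he$ are adjacent, via the square $f\times\he$. Both vertices lie in the image of $\lk_Z(x)$, yet $f\times\he$ is not in $Z$. Being an embedding on each vertex space and each thickened edge space is therefore not enough. This failure occurs as soon as $j(Y_E)$ is a proper subcomplex of $\hX$, that is, in every nontrivial case.

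\textbf{Why this matters.} It is not a technicality. Suppose a hyperplane $H$ of $\hX$ both crosses $j(Y_E)$ (dual to an edge $f'\subset j(Y_E)$) and osculates with it (dual to an edge $f\not\subset j(Y_E)$ with an endpoint $x\in j(Y_E)$). Then in $Z$ two hyperplanes inter-osculate: the one extending $H\times\{\hu\}$, and the vertical hyperplane of $Y_E\times[0,1]$. They cross in the square $f'\times\he$ and osculate at $x$ along $f$ and the horizontal edge at $x$. So $Z$ is not special, while the product target sees nothing wrong because there the square $f\times\he$ exists. Your Step 1 chooses $\hX$ only so that the elevations of the $Y_E$ embed (via separability or canonical completion), and that does not exclude this configuration. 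Your concern that the $Y_E$ be special in their own right is automatic and is not the property that is needed.

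\textbf{The missing ingredient.} You need a finite regular special cover $\hX$ in which every elevation of every $Y_E$ is embedded \emph{and} does not inter-osculate with hyperplanes of $\hX$. The paper obtains this from Shepherd's result, Lemma~\ref{lemma of interosculation}, proved via walkers and imitators.

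With that cover, your construction becomes the paper's. The overall strategy otherwise matches: the folding retraction, the preimage of a finite-index special subgroup, and a graph of special cube complexes with a single vertex space.
\begin{itemize}
\item Both ends of each edge space are attached by the same elevation. The paper gets this cleanly as the fiber product $\hX_v\times_{X_v}X$ along the retraction $r$, which also resolves your ``main obstacle''.
\item One then checks the four pathologies directly, as in Corollary~\ref{cor of special}, rather than through a local isometry to a product.
\item Vertical hyperplanes are $2$-sided and do not self-cross.
\item Embeddedness rules out direct self-osculation of vertical hyperplanes (Lemma~\ref{lemma of vertical hyperplane}).
\item Non-inter-osculation of the images rules out inter-osculation between vertical hyperplanes and vertex-space hyperplanes. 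This is exactly the case your immersion cannot detect.
\item Every other pathology pushes forward to $\hX$ under the retraction, which preserves parallelism of non-horizontal edges.
\end{itemize}
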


The theorem fails in general if we do not require the compatibility of the monomorphisms from edge groups in the definition of constant vertex groups. 
For example, the graph of graphs $X$ constructed in \cite[Part \RNum{2}, Section 2]{Wise1996NPCSquareComplexes} (see also \cite[Section 4]{Wise2007CompleteSquareComplexes}) has a single vertex graph.
Hence $\pi_1X$ splits as a graph of free groups with a single vertex group. However, $\pi_1X$ is not residually finite \cite[Corollary 1.4]{CapraceWesolek2018Indicability}, and therefore is not virtually special.

Theorem \ref{general result} is a consequence of the more general Corollary \ref{cor of Gromov-hyperbolic}, which concerns graphs of groups with locally constant vertex groups. (See Definition \ref{def of locally constant} for the definition of locally constant vertex groups and locally constant vertex spaces.) 

More generally, all of the preceding results ultimately follow from the following theorem, which does not assume hyperbolicity of the vertex groups and is stated in the setting of graphs of cube complexes (see Definition~\ref{def: graph of cube complexes}).
\begin{restatable}{thm}{mainthm}
\label{cubical thm}
Let $X$ be a compact connected cube complex that splits as a graph of virtually special cube complexes with locally constant vertex spaces. Then $X$ is virtually special.
\end{restatable}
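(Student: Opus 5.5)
The plan is to build a finite cover of $X$ directly, one that is itself a graph of special cube complexes in which all edge-space attaching maps are compatible with the vertex covers. I will then verify the Haglund--Wise hyperplane conditions (Definition~\ref{def of special}) for this cover, or equivalently construct a local isometry into the Salvetti complex of a right-angled Artin group.

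Write the graph of spaces as $X$ with underlying graph $\Gamma$, vertex spaces $X_v$ and edge spaces $X_e\times[0,1]$, attached by local isometries $X_e\to X_{v}$. Local constancy of the vertex spaces should mean the following: on each connected component of some subgraph, the vertex spaces are identified with a single cube complex $Y$, and the two attaching maps of each edge agree after composing with these identifications.

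\textbf{Step 1: a common special cover.} Since each $Y$ is virtually special, choose a finite special cover $\hY\to Y$. By passing to a further regular cover (for instance the intersection of all conjugates, which is still special), assume $\hY\to Y$ is regular with deck group $Q$. For each edge $e$, the attaching map $X_e\to Y$ is a local isometry. Pulling back along $\hY\to Y$ gives a finite cover $\hX_e\to X_e$ whose components map to $\hY$ by local isometries.

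\textbf{Step 2: assembling the cover $\hX$.} Replace each vertex space by a copy of $\hY$. For each edge whose two attaching maps agree under the identification with $Y$, glue each component of $\hX_e\times[0,1]$ to the two copies of $\hY$ using the \emph{same} lifted map on both ends. Compatibility of the two attaching maps is exactly what guarantees that the degrees match, so the result $\hX$ is a finite cover of $X$, without any need for the ``elevations have equal degree'' matching argument. In the locally constant case, where several constant pieces are joined by edges whose maps are not compatible, I expect to need a Wise-style argument instead. One option is to pass to a single characteristic special cover of all the different $Y$'s, so that each nonconstant edge can be handled as an ordinary edge in a graph of special complexes with separable edge subgroups; I would try to reduce to the constant case by cutting along those edges.

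\textbf{Step 3: specialness of $\hX$.} There is a natural map $\hX\to\hY$, the ``folding'' map. It is defined on each vertex space by the identification and on each edge space by projecting $\hX_e\times[0,1]\to\hX_e\to\hY$. This map is cellular except that it collapses the $[0,1]$ direction. Hyperplanes of $\hX$ come in two kinds. The first kind are vertical hyperplanes, dual to the $[0,1]$ direction of an edge space, which are just copies of $\hX_e$. The second kind are horizontal hyperplanes, which map to hyperplanes of $\hY$. Because $\hY$ is special, horizontal hyperplanes are two-sided and do not self-osculate or interosculate: any pathology would map to one in $\hY$, and the folding map is a local isometry on horizontal data. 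Vertical hyperplanes are two-sided and embedded. A vertical hyperplane can only cross horizontal ones, and it cannot self-osculate because the $[0,1]$ direction is distinguished. So the remaining issue is interosculation between a vertical and a horizontal hyperplane, and more generally direct self-osculation of horizontal hyperplanes that pass through several vertex spaces.

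\textbf{Main obstacle.} The hardest step will be this last verification, together with the locally constant, non-compatible case in Step 2. The folding map need not be a local isometry globally, because two horizontal hyperplanes meeting in different vertex spaces of $\hX$ may map to the same pair in $\hY$. To handle it, I would first try to pass to a further finite cover of $\Gamma$ (pulling back $\hX$ along a finite cover of the underlying graph) so that the underlying graph has large girth and is bipartite. Then, using separability of the edge subgroups inside the special group $\pi_1\hY$ (Haglund--Wise: subgroups arising from local isometries of compact complexes into special complexes are separable), I would kill the finitely many interosculating configurations by canonical completion and retraction. This would yield a finite cover of $\hX$ that is special.
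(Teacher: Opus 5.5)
There are two genuine gaps.

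\textbf{The locally constant case.} You have misread the hypothesis. By Definition~\ref{def of locally constant}, \emph{every} edge $e$ carries an isometry $\theta_e\colon X_{\iota(e)}\to X_{\tau(e)}$ with $\theta_e\circ\phi_e^-=\phi_e^+$. So there are no ``incompatible'' edges. The only obstruction to constancy is the monodromy $\pi_1(\Gamma,v)\to\Aut(X_v)$ around cycles of $\Gamma$.

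Under your reading (constant pieces joined by arbitrary edges) the statement is actually false. Wise's graph of graphs with a single vertex space, recalled in the remark after the proof, has a free vertex group and finitely generated, hence separable, edge groups. Yet its fundamental group is not residually finite. So no argument that treats such edges ``as ordinary edges with separable edge subgroups'' can succeed, and cutting along them reduces nothing.

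The missing idea is short. $X_v$ is compact, so $\Aut(X_v)$ is finite and the monodromy is finite. The finite-index kernel of $\pi_1X\to\pi_1\Gamma\to\Aut(X_v)$ then gives a finite cover of $X$ that splits with constant vertex spaces (Lemma~\ref{lemma: local become global}(3)).

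\textbf{The constant case.} Your Step~2 (pulling $\hY\to Y$ back along the folding map) and your treatment of horizontal hyperplanes agree with the paper. However, an arbitrary special cover $\hY$ in Step~1 is not enough, and your claim that vertical hyperplanes cannot self-osculate is false.

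Suppose a lifted attaching map $\hX_e\to\hY$ identifies two vertices $x\neq y$ of one component. Then $\{x\}\times[0,1]$ and $\{y\}\times[0,1]$, both directed from $0$ to $1$, share an initial vertex, are parallel, and are never consecutive in a square. That is a direct self-osculation of the vertical hyperplane. Likewise, a vertical and a horizontal hyperplane inter-osculate whenever the image of $\hX_e$ inter-osculates with a hyperplane of $\hY$.

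Both configurations live in one vertex space plus one edge space. Large-girth or bipartite covers of $\Gamma$ lift each vertex space and its incident edge spaces isomorphically, so they do not remove them. ``Kill them by canonical completion and retraction'' is also not an argument. Canonical completion gives embedded elevations, but excluding inter-osculation with hyperplanes needs a stronger, double-coset-type input.

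The paper's key step is to choose $\hY$ from the outset via Lemma~\ref{lemma of interosculation} (Shepherd). This makes all elevations of the finitely many $X_e\to Y$ embedded and non-inter-osculating with hyperplanes of $\hY$. Lemma~\ref{lemma of vertical hyperplane} then handles the vertical hyperplanes.

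Your worry that the folding map is not globally a local isometry is a red herring. The map is an isometry on each vertex space and preserves parallelism of directed edges (Lemma~\ref{retraction of space}). That is all Corollary~\ref{cor of special} needs to push every remaining pathology down into $\hY$.
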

Theorem \ref{cubical thm} is related to the Malnormal Combination Theorem \cite[Theorem~8.2]{haglund-wise2012} in the following sense. The latter applies in the hyperbolic setting and requires a malnormality assumption on the edge groups, whereas Theorem \ref{cubical thm} does not impose such a hypothesis. Instead, it applies in the more symmetric setting where all vertex spaces are isometric.

\subsection*{Structure of the paper}
We first review the necessary background on $\CAT(0)$ cube complexes and special cube complexes in Sections~\ref{section: cat(0)} and~\ref{section: special cube complex}. 
In Section~\ref{section: graph of cube complexes}, we recall the notation for graphs of cube complexes.
In Sections \ref{section: constant} and \ref{section: locally constant}, we introduce the notions of (locally) constant vertex groups and (locally) constant vertex spaces.
Then we prove our main theorems in Section \ref{section: virtually special}.
Finally, in Section~\ref{section: finite stature}, we make some remarks about the related notion of finite stature.

\subsection*{Acknowledgements}
The author would like to thank Jingyin Huang for the encouragement and many helpful discussions.
The author would also like to thank Sami Douba and Konstantinos Tsouvalas for helpful communication. 
This work was motivated by their work~\cite{doubaTsouvalas2026}.

\section{$\CAT(0)$ cube complexes}\label{section: cat(0)}
We review the following notions of $\CAT(0)$ cube complexes from \cite{wise2021}.
In this paper, all cube complexes are assumed to be connected. 
\begin{defn}
An \emph{$n$-cube} is a copy of $[0,1]^n$. Its subcubes are obtained by restricting some coordinates to $0$ and some to $1$. A \emph{cube complex} is obtained by gluing a collection of cubes along their subcubes isometrically. A \emph{flag complex} is a simplicial complex with the property that every finite set of pairwise adjacent vertices spans a simplex. A cube complex $X$ is \emph{nonpositively curved} if the link $\lk(v)$ of each vertex $v$ of $X$ is a flag complex. A nonpositively curved cube complex is \emph{$\CAT(0)$} if it is simply-connected.
\end{defn}

\begin{defn}
Let $Q$ be an $n$-cube of a cube complex $X$.
A \emph{midcube} of $Q\cong [0,1]^n$ is the subspace obtained by restricting one coordinate to $1/2$. Let $M$ be the disjoint union of the collection of midcubes of cubes of $X$. Let $D$ be the quotient space of $M$ induced by identifying faces of midcubes under the inclusion map. The connected components of~$D$ are called the \emph{hyperplanes} of $X$.
\end{defn}

\begin{defn}
Let $f: Y\to X$ be a combinatorial map of nonpositively curved cube complexes. We say $f$ is a \emph{local isometry} if for each vertex $v$ of $Y$, the induced map of links $\lk(v)\to \lk(f(v))$ is an embedding and has full image (a subcomplex $A$ of a simplicial complex $B$ is \emph{full} if each simplex of $B$ whose vertices are in $A$ is entirely contained in $A$).
\end{defn}

\begin{defn}
Let $f: Y\to X$ be a local isometry of connected nonpositively curved cube complexes and let $p:\hX\to X$ be a covering map. The \emph{fiber product} $Y\times_X\hX$ is defined to be
\[
Y\times_X\hX = \{(a,b)\in Y\times \hX \mid f(a) = p(b)\}.
\]
An \emph{elevation} $\hY$ of $Y$ to $\hX$ is a connected component of $Y\times_X \hX$. 
\end{defn}

\begin{rmk}\label{remark of elevation and subgroup}
Each elevation of $Y$ to $\hX$ is covering of $Y$. 
By covering space theory, there is a one-to-one correspondence between the set of double cosets $(f_*\pi_1Y)\backslash \pi_1X/(p_*\pi_1\hX)$ and connected components of $Y\times_X\hX$, where the component corresponding to $(f_*\pi_1Y)g(p_*\pi_1\hX)$ has fundamental group $g^{-1}(f_*\pi_1Y)g\cap (p_*\pi_1\hX)$.
In particular, when $f_*: \pi_1Y\to \pi_1X$ is surjective and $\hX$ is connected, then $Y\times_X\hX$ is connected, and it is the covering of $Y$ corresponding to $(f_*\pi_1Y)\cap (p_*\pi_1\hX)$.
\end{rmk}

\section{Special cube complexes}\label{section: special cube complex}
We review the following from \cite{Haglund-Wise2008}.
\begin{defn}
Let $X$ be a cube complex. If $e$ is an edge of $X$, then $\dr{e}$ denotes the edge $e$ equipped with a chosen direction, and $\dl{e}$ denotes the same edge with the opposite direction.

Let $Y\subset X$ be a subcomplex.
Two edges $a$ and $b$ of $Y$ are \emph{elementary parallel} if there exists a square $s: [0,1]^2\to Y$ of $Y$ such that $a,b$ are opposite sides of $s$ in the sense that $a = s([0,1]\times \{0\})$ and $b = s([0,1]\times \{1\})$. Two directed edges $\dr{a}$ and $\dr{b}$ are \emph{elementary parallel} if $a,b$ are elementary parallel in some square $s$ of $Y$ such that $\dr{a}$ and $\dr{b}$ are both directed from $s(0,t)$ to $s(1,t)$ for $t = 0$ and $1$ respectively. The relation of elementary parallelism generates an equivalence relation among edges of $Y$, called \emph{parallelism}, on both indirected and directed edges. We write $a\para_Y b$ (resp. $\dr{a}\para_Y \dr{b}$) if $a,b$ (resp. $\dr{a}, \dr{b}$) are parallel in $Y$. When the subcomplex $Y$ is $X$, we simply write $a\para b$ (resp. $\dr{a}\para \dr{b}$).
\end{defn}
\begin{defn}
Let $Y\subset X$ a subcomplex of a cube complex $X$ and let $H$ be a hyperplane of $Y$. An edge $e$ of $Y$ is \emph{dual} to $H$ if the midcube of $e$ is a $0$-cube of $H$. A directed edge $\dr{e}$ is \emph{dual} to $H$ if $e$ is dual to $H$.
\end{defn}
\begin{rmk}
Note that $a \para b$ if and only if $a$ and $b$ are dual to the same hyperplane. However, it is possible that $\dr{a} \npara \dr{b}$ even when $\dr{a}$ and $\dr{b}$ are dual to the same hyperplane, in which case we have $\dr{a} \para \dl{b}$.
\end{rmk}
\begin{defn}\label{def of cross and osculate}
Let $X$ be a cube complex. We introduce the following terminology for hyperplanes of $X$.
\begin{enumerate}
    \item A hyperplane $H$ is \emph{$2$-sided} if there does not exist a directed edge $\dr{e}$ dual to $H$ such that $\dr{e}\para \dl{e}$.
    \item Two hyperplanes $H_1$ and $H_2$ \emph{cross} if there exist a vertex $v$ and edges $e_1,e_2$ incident to $v$ that are consecutive in a square of $X$, where $e_i$ is dual to $H_i$. In this case, we say that $H_1$ and $H_2$ cross at $(v;e_1,e_2)$. When $H = H_1 = H_2$, we say that $H$ crosses itself.
    \item Two hyperplanes $H_1$ and $H_2$ \emph{osculate} if there exist a vertex $v$ and edges $e_1,e_2$ incident to $v$ that are not consecutive in any square of $X$, where $e_i$ is dual to $H_i$. In this case, we say that $H_1$ and $H_2$ inter-osculate at $(v;e_1,e_2)$. When $H = H_1 = H_2$, we say that $H$ self-osculates.
    \item A hyperplane $H$ \emph{directly self-osculates} if there exist a vertex $v$ and distinct directed edges $\dr{e_1},\dr{e_2}$ with initial vertex $v$ such that $H$ self-osculates at $(v;e_1,e_2)$. In this case, we say that $H$ directly self-osculates at $(v;\dr{e_1},\dr{e_2})$.
    \item Two distinct hyperplanes \emph{inter-osculate} if they both cross and osculate.
\end{enumerate}
\end{defn}

\begin{defn}\label{def of special}
A nonpositively curved cube complex $X$ is \emph{special} if:
\begin{enumerate}
    \item no hyperplane crosses itself;
    \item each hyperplane is $2$-sided;
    \item no hyperplane directly self-osculates;
    \item no two hyperplanes inter-osculate.
\end{enumerate}
A cube complex is virtually special if it admits a finite-sheeted cover that is special.
A group is \emph{(compact) special} if it is the fundamental group of a (compact) special cube complex. A group is \emph{virtually (compact) special} if it has a finite index subgroup that is (compact) special.
\end{defn}
\begin{rmk}
\begin{enumerate}
    \item The definition of self-osculation above does not involve directed edges and therefore includes both direct and indirect self-osculations. In some literature, the term self-osculation is used to mean direct self-osculation. For example, see \cite{HaglundWise2010coxeter}.
    \item If we further forbid self-osculation, then the cube complex satisfies the stronger notion of being \emph{directly special}, which appears in \cite{Huang2018Commensurability}, \cite{Shepherd2023}, and \cite{shepherd2025productseparabilityspecialcube}. Among compact cube complexes, the notions of special and directly special are equivalent up to finite covers \cite[Proposition 3.10]{Haglund-Wise2008}.
\end{enumerate}
\end{rmk}

We also introduce the following generalization of crossing and osculation between a hyperplane and a subcomplex, which appeared in \cite[Remark A.9]{OregonReyes2023} and \cite[Definition 2.11]{Shepherd2023}.
\begin{defn}
Let $X$ be a nonpositively curved cube copmlex, and let $Y\subset X$ be a subcomplex. Suppose $H$ is a hyperplane of $X$.
\begin{enumerate}
    \item The hyperplane $H$ \emph{crosses} the subcomplex $Y$ if $H\cap Y\neq\emptyset$.
    \item The hyperplane $H$ and the subcomplex $Y$ \emph{osculate} at $(v; e)$ if $v$ is a vertex of $Y$ and $e$ is an edge of $X \setminus Y$ incident to $v$ and dual to $H$.
    \item The hyperplane $H$ and the sbucomplex $Y$ \emph{inter-osculate} if they cross and osculate.
\end{enumerate}
\end{defn}

The following was proved in \cite[Corollary 4.8]{Shepherd2023} using the walker and imitator construction.
\begin{lem}\label{lemma of interosculation}
Let $Y_1,\cdots, Y_n\to X$ be local isometries of compact virtually special cube complexes. Then there exists a compact special regular cover $\hX\to X$ such that all elevations of the $Y_i$ to $\hX$ are embedded and do not inter-osculate with hyperplanes of $\hX$.
\end{lem}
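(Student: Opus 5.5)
This lemma is \cite[Corollary 4.8]{Shepherd2023}, where it is proved with the walker and imitator construction. The plan below is an alternative that reduces everything to separability properties of special groups. The strategy is to pass step by step to deeper finite covers, each removing one kind of defect. At each step I will check that the property already gained survives further covers, and I will take normal cores to keep the cover regular.

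\emph{Step 1: reduce to $X$ special.} First replace $X$ by a compact special finite cover $X'$. Each elevation of $Y_i$ to $X'$ is a finite cover of $Y_i$, so it is compact. By Remark~\ref{remark of elevation and subgroup} there are only finitely many such elevations, one for each double coset. They are again local isometries to $X'$ and are virtually special. An elevation to a further cover of $X$ factors through an elevation to $X'$, so we may assume $X$ itself is special.

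\emph{Step 2: make all elevations embedded.} For a local isometry $Y\to X$ of compact complexes with $X$ special, use the Haglund--Wise canonical completion $C(Y,X)\to X$. This is a finite cover in which $Y$ lifts to an embedded subcomplex, and it comes with a retraction. Embeddedness is inherited by elevations to any further cover, since an elevation of an embedded locally convex subcomplex is embedded. Let $K$ be the normal core of the intersection of the subgroups $\pi_1 C(Y_i,X)$.
\begin{itemize}
\item Elevations of $Y_i$ to the regular cover $\hX_K$ correspond to elements $g$ of $\pi_1X$.
\item The elevation indexed by $g$ is the elevation of the conjugate configuration: it sits in the $g$-conjugate of $C(Y_i,X)$, and $g$ normalizes $K$.
\end{itemize}
Hence every elevation is embedded.

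\emph{Step 3: remove inter-osculation.} Suppose an elevation $\hY$ and a hyperplane $H$ of $\hX$ cross and also osculate at $(v;e)$. Then there is an element of $\pi_1X$ witnessing this, namely a loop that goes from $\hY$ into the carrier of $H$ and back through the edge $e$. This element lies in the double coset $\pi_1Y_i\cdot\pi_1N(H)$, but it is not realized by a path inside $Y_i\cup N(H)$ through $e$. Both subgroups here are convex, i.e.\ they come from local isometries to $X$. Because $X$ is special, $\pi_1X$ is a convex subgroup of a right-angled Artin group, and convex subgroups of RAAGs have separable double cosets (Minasyan). By compactness there are only finitely many such witnesses up to deck transformations, finitely many per pair $(Y_i,\text{hyperplane of }X)$. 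So a single finite-index normal subgroup separates all of them from the relevant double cosets. In the corresponding regular cover no elevation inter-osculates with a hyperplane. This property also passes to further covers, because a crossing-plus-osculation upstairs projects to one downstairs. Finally, intersect with the special cover from Step 1 and take the normal core to obtain $\hX$.

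\emph{Main obstacle.} I expect Step 3 to be hardest. Translating inter-osculation precisely into a double coset separation statement is delicate: the osculating edge must be tracked by a based path, and the statement must hold uniformly for all elevations, not just the basepoint one. If the double coset argument becomes unwieldy, I would fall back on Shepherd's walker and imitator construction \cite{Shepherd2023}. That construction builds the cover combinatorially, by completing $Y_i\cup N(H)$ in a way that respects hyperplane labels.
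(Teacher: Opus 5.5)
The paper gives no argument for this lemma; it simply quotes \cite[Corollary~4.8]{Shepherd2023}. Your Steps~1 and~2 are a sound start on an independent proof. Step~3, which carries all the content, has a genuine gap, and its key sentence is backwards.

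Fix a lift $\tilde{Y}$ of $Y_i$ to the universal cover $\tilde{X}$, with stabilizer $A=\pi_1Y_i$. Let $K$ be a finite-index normal subgroup of $\pi_1X$ for which the elevations are already embedded. Then an elevation to $K\backslash\tilde{X}$ inter-osculates with some hyperplane exactly when some $k\in K$ carries a hyperplane crossing $\tilde{Y}$ to a hyperplane osculating with $\tilde{Y}$. Embeddedness is what lets ``$e\not\subset\hY$'' lift to ``$\tilde{e}\not\subset\tilde{Y}$''. Let $\tilde{H}_1,\dots,\tilde{H}_m$ represent the $A$-orbits of crossing hyperplanes, and set $B_j=\stab(\tilde{H}_j)$. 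The bad elements form a finite union of sets $AgB_jA$, and normality of $K$ reduces the requirement to $g\notin AB_jK$ for finitely many $g$.

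So the witnesses lie \emph{outside} $AB_j$ and must be separated from it. Your claim that the witness lies in the double coset $\pi_1Y_i\cdot\pi_1N(H)$ says the opposite; if it were true, no finite quotient could separate it. The fact that makes separation possible is never stated: $g\notin AB_j$, because $g=ab$ would send $\tilde{H}_j$ to the crossing hyperplane $a\tilde{H}_j$. A convex subcomplex of a $\CAT(0)$ cube complex never both crosses and osculates with the same hyperplane. Indeed, suppose $\tilde{H}$ is dual to an edge of $\tilde{Y}$ and to an edge $\tilde{e}$ at a vertex of $\tilde{Y}$. Then no hyperplane separates the other endpoint of $\tilde{e}$ from $\tilde{Y}$, so $\tilde{e}\subset\tilde{Y}$ by convexity.

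Second, the separability you invoke is not the one you need. You need the products $\pi_1Y_i\cdot\stab(\tilde{H}_j)$ of two convex-cocompact subgroups to be separable in $\pi_1X$ with no hyperbolicity assumption, because Theorem~\ref{cubical thm} applies the lemma in that generality. Knowing that $\pi_1X$ sits convexly in a right-angled Artin group says nothing about these particular products. Minasyan's theorem for right-angled Artin groups is stated for subgroups convex with respect to the standard generators, and a convex-cocompact subgroup need not be convex in that sense (already $\langle a^2\rangle<\langle a\rangle$ is not). So the theorem does not apply as you cite it.

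In the hyperbolic case the needed statement does follow, from Haglund--Wise's separability of quasiconvex subgroups combined with Minasyan's theorem on GFERF groups. In general you must either prove double coset separability for convex-cocompact subgroups of virtually compact special groups or cite a precise theorem that gives it. Without that, Step~3 does not establish the lemma, and your stated fallback is just the citation the paper already uses.
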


\section{Graph of cube complexes}\label{section: graph of cube complexes}
We review the notion of graph of group and graph of spaces from \cite{Serre1980} and \cite{ScottWall1979}.
\begin{defn}\label{def: graph of cube complexes}
A \emph{graph of nonpositively curved cube complexes} $X_\Gamma$ over a connected underlying graph $\Gamma$ consists of
\begin{enumerate}
    \item a connected vertex space $X_v$ that is a nonpositively curved cube complex for each vertex $v\in V(\Gamma)$ of $\Gamma$;
    \item a connected edge space $X_e$ that is a nonpositively curved cube complex for each edge $e\in E(\Gamma)$ of $\Gamma$;
    \item locally isometric attaching maps $\phi_e^-: X_e\to X_{\iota(e)}$ and $\phi_e^+: X_e\to X_{\tau(e)}$ for each edge $e$ and its two endpoints $\iota(e),\tau(e)$.
\end{enumerate}
The \emph{total space} of $X_\Gamma$ is defined to be the quotient space 
\[
\left(\bigsqcup_{v\in V(\Gamma)}X_v\right)\sqcup \left(\bigsqcup_{e\in E(\Gamma)}X_e\times[0,1]\right)\Big / \sim
\]
where $X_e\times\{0\}\sim \varphi_e^-(X_e)$ and $X_e\times \{1\}\sim \varphi_e^+(X_e)$.
A cube complex $X$ splits as a graph of nonpositively curved cube complexes $X_\Gamma$ if $X$ is the total space of $X_\Gamma$.
\end{defn}

\begin{defn}
A \emph{graph of groups} $G_\Gamma$ over a connected underlying graph $\Gamma$ consists of
\begin{enumerate}
    \item a vertex groups $G_v$ for each vertex $v\in V(\Gamma)$ of $\Gamma$;
    \item an edge group $G_e$ for each edge $e\in E(\Gamma)$ of $\Gamma$;
    \item monomorphisms $\Phi_e^-: G_e\to G_{\iota(e)}$ and $\Phi_e^+: G_e\to G_{\tau(e)}$ for each edge $e$ and its two endpoints $\iota(e),\tau(e)$.
\end{enumerate}
Let $T$ be a spanning tree of $\Gamma$, then the \emph{fundamental group} of $G_\Gamma$ is the group generated by $\{G_v\mid v\in V(\Gamma)\}$ and $\{t_e\mid e\in E(\Gamma)\}$ subject to the relations
\[
t_e\Phi_e^-(x)t_e^{-1} = \Phi_e^+(x),\quad \forall x\in G_e, \ e\in E(\Gamma)
\quad \text{and}\quad
t_e = 1,\quad \forall e\in E(T).
\]
A groups $G$ splits as a graph of groups if it is the fundamental group of a graph of groups~$G_\Gamma$.
\end{defn}

\begin{defn}
Let $X_\Gamma$ be a graph of nonpositively curved cube complexes. Then for each thickened edge space $X_e\times [0,1]$, we have the following two types of edges:
\begin{itemize}
    \item \emph{Horizontal edges} of the form $x\times [0,1]$ for some vertex $x\in V(X_e)$;
    \item \emph{Vertical edges} of the form $f\times t$ for some edge $f\in E(X_e)$ and $t = 0,1$.
\end{itemize}
A hyperplane of $X_e\times [0,1]$ which is dual to a horizontal edge (resp. vertical edge) is called a \emph{vertical hyperplane} (resp. \emph{horizontal hyperplane}) of $X_e\times [0,1]$. In particular, each thickened edge space $X_e\times [0,1]$ has a unique vertical hyperplane, which is isometric to $X_e$.
\end{defn}
\begin{lem}\label{lemma of vertical hyperplane}
Let $X$ be a cube complex which splits as a graph of nonpositively curved cube complexes $X_\Gamma$. Then for each edge $e$ of $\Gamma$, the following hold:
\begin{enumerate}
    \item The vertical hyperplane of the thickened edge space $X_e\times [0,1]$ does not cross itself, and is $2$-sided.
    \item If the attaching maps $X_e\to X_{\iota(e)}$ and $X_e\to X_{\tau(e)}$ are embedded, then the vertical hyperplane of $X_e\times [0,1]$ does not directly self-osculate.
    \item If the images of attaching maps $X_e\to X_{\iota(e)}$ and $X_e\to X_{\tau(e)}$ do not inter-osculate with hyperplanes of $X_{\iota(e)}$ and $X_{\tau(e)}$, then the vertical hyperplane of $X_e\times [0,1]$ does not inter-osculate with hyperplanes of $X_{\iota(e)}$ and $X_{\tau(e)}$.
\end{enumerate}
\begin{proof}
Let $H$ be the vertical hyperplane of the thickened edge space $X_e\times [0,1]$. Then $H$ is dual only to horizontal edges of $X_e\times [0,1]$. Since horizontal edges are not consecutive in any square of $X$, the vertical hyperplane $H$ does not cross itself.

Furthermore, let $\{x\}\times [0,1]$ and $\{y\}\times [0,1]$ be two horizontal edges that are elementarily parallel (i.e. they are opposite sides of a square). Then the vertices $x$ and $y$ of $X_e$ are joined by an edge $f$. The square containing $\{x\}\times [0,1]$ and $\{y\}\times [0,1]$ has boundary edges
\[
\{x\}\times [0,1],\quad f\times \{1\},\quad \{y\}\times [0,1],\quad f\times \{0\}.
\]
Thus, the horizontal edges $\{x\}\times [0,1]$ and $\{y\}\times [0,1]$ are elementarily parallel as directed edges if they are both directed from $0$ to $1$, or both directed from $1$ to $0$.
Consequently, any two parallel directed horizontal edges in $X_e\times [0,1]$ are directed either both from $0$ to $1$ or both from $1$ to $0$. In particular, the vertical hyperplane $H$ is $2$-sided. This proves~(1).

Suppose that the vertical hyperplane $H$ directly self-osculate at $(v; \dr{e_1}, \dr{e_2})$ for two horizontal edges $e_1 = \{x\}\times [0,1], e_2 = \{y\}\times [0,1]$. If they are both directed from~$0$ to~$1$ with the common initial vertex $v = (x,0) = (y,0)$, then the vertices $x,y$ of $X_e$ have the same image under the attaching map $\phi_e^-: X_e\to X_{\iota(e)}$. In particular, $\phi_e^-$ is not an embedding as shown in Figure \ref{fig: EmbeddingInterosculation}. Similarly, if $e_1,e_2$ are both directed from $1$ to $0$ with the common initial vertex $v = (x,1) = (y,1)$, then the attaching map $\phi_e^+: X_e\to X_{\tau(e)}$ is not an embedding. This proves~(2).

Finally, suppose that the vertical hyperplane $H$ of $X_e\times [0,1]$ inter-osculates with a hyperplane $H'$ of $X_{\iota(e)}$. Then there exist two horizontal edges $e_1 = \{x\}\times [0,1], e_2 = \{y\}\times [0,1]$ of $X_e\times [0,1]$ and two edges $f_1, f_2$ dual to $H'$, such that $H, H'$ cross at $(v_1;e_1,f_1)$ and osculate at $(v_2;e_2,f_2)$. In particular, $e_1,f_1$ are consecutive in a square, and $e_2,f_2$ are not consecutive in any square. Since $e_1$ is a horizontal edge of $X_e\times [0,1]$, $f_1$ is the image of a vertical edge under $\phi_e^-$. Thus, $H'\cap \phi_e^-(X_e)$ contains the midpoint of $f_1$ and in particular is nonempty. As $e_2,f_2$ are not consecutive in any square, $f_2$ is an edge of $X_{\iota(e)}\setminus \phi_e^-(X_e)$. Thus, $H'$ and $\phi_e^-(X_e)$ osculate at $(v_2; f_2)$ as shown in Figure~\ref{fig: EmbeddingInterosculation}. Therefore, $\phi_e^-(X_e)$ inter-osculates with the hyperplane $H'$ of $X_{\iota(e)}$. Similarly, if the vertical hyperplane $H$ of $X_e\times [0,1]$ inter-osculates with a hyperplane of $X_{\tau(e)}$, then the subcomplex $\phi_e^+(X_e)$ inter-osculates with a hyperplane of $X_{\tau(e)}$. This proves (3). 
\end{proof}
\end{lem}
\begin{figure}[h]
    \centering
    \includegraphics[width=0.75\linewidth]{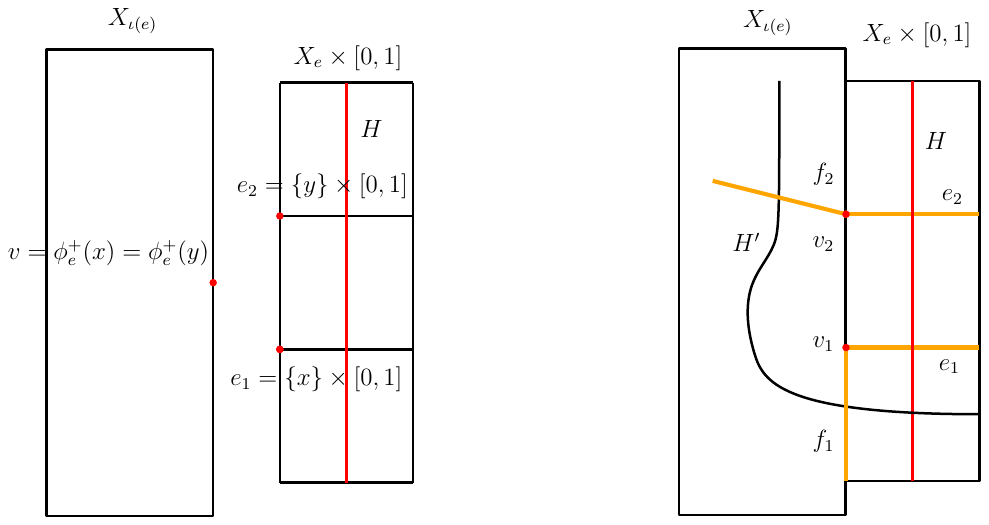}
    \caption{In the left figure, the red hyperplane $H$ directly self-osculates at $(v;e_1,e_2)$. In the right figure, the red hyperplane $H$ and the black hyperplane $H'$ cross at $(v_1;e_1,f_1)$ and osculate at $(v_2;e_2,f_2)$.}
    \label{fig: EmbeddingInterosculation}
\end{figure}

\section{Constant vertex spaces and groups}\label{section: constant}
\begin{defn}\label{def of constant}
A graph of nonpositively curved cube complexes $X_\Gamma$ is said to have  \emph{constant vertex spaces} if there exists a nonpositively curved cube complex $X_V$, called the \emph{constant vertex space}, together with a cubical isometry $\psi_u \colon X_u \to X_V$ for each vertex space $X_u$, such that $\psi_{\iota(e)} \circ \phi_e^- = \psi_{\tau(e)} \circ \phi_e^+$ for every $e$ of $E(\Gamma)$.

A graph of groups $G_\Gamma$ is said to have \emph{constant vertex groups} if there
exists a group $G_V$, called the \emph{constant vertex group}, together with an isomorphism
$\Psi_u: G_u \to G_V$ for each vertex group $G_u$, such that $\Psi_{\iota(e)} \circ \Phi_e^- = \Psi_{\tau(e)} \circ \Phi_e^+$ for every $e \in E(\Gamma)$.
\end{defn}

\begin{rmk}
A graph of nonpositively curved cube complexes $X_\Gamma$ has an associated graph of groups $G_\Gamma$ over $\Gamma$, whose vertex groups are $G_v = \pi_1 X_v$ and whose edge groups are $G_e = \pi_1 X_e$. The attaching maps $\phi_e^-, \phi_e^+$ induce monomorphisms of edge groups $\Phi_e^- = (\phi_e^-)_*: \pi_1X_e\to \pi_1X_{\iota(e)}$ and $\Phi_e^+ = (\Phi_e^+)_*: \pi_1X_e\to \pi_1X_{\tau(e)}$. In particular, if $X_\Gamma$ has constant vertex spaces, then $G_\Gamma$ has constant vertex groups.
\end{rmk}

\begin{lem}\label{retraction of group}
Let $G$ be the fundamental group of a graph of groups $G_\Gamma$ with constant vertex groups. Then for each vertex group $G_v$, there exists a retraction $\rho: G\to G_v$ whose restriction to each vertex group $G_u$ is an isomorphism $G_u\cong G_v$. 
\end{lem}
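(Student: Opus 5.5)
We construct $\rho$ directly on the presentation of $G$ from the definition of the fundamental group of $G_\Gamma$, and then check the relations. Fix a spanning tree $T$ of $\Gamma$, so $G$ is generated by the vertex groups $G_u$ ($u\in V(\Gamma)$) and the letters $t_e$ ($e\in E(\Gamma)$). We use the isomorphisms $\Psi_u\colon G_u\to G_V$ from Definition~\ref{def of constant}.

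On generators we set
\[
\rho|_{G_u} = \Psi_v^{-1}\circ\Psi_u \colon G_u\to G_v \quad\text{for every } u\in V(\Gamma), \qquad \rho(t_e)=1 \quad\text{for every } e\in E(\Gamma).
\]
By von Dyck's theorem, this assignment extends to a homomorphism $G\to G_v$ as soon as every defining relation is sent to the identity. We check the two families of relations in turn.

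\begin{enumerate}
\item The relations $t_e=1$ for $e\in E(T)$ hold in $G_v$ trivially.
\item The relations $t_e\Phi_e^-(x)t_e^{-1}=\Phi_e^+(x)$ for $x\in G_e$. Since $\rho(t_e)=1$, we need $\rho(\Phi_e^-(x))=\rho(\Phi_e^+(x))$. The left side is $\Psi_v^{-1}\Psi_{\iota(e)}\Phi_e^-(x)$ and the right side is $\Psi_v^{-1}\Psi_{\tau(e)}\Phi_e^+(x)$. These agree precisely by the compatibility condition $\Psi_{\iota(e)}\circ\Phi_e^-=\Psi_{\tau(e)}\circ\Phi_e^+$.
\end{enumerate}

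So $\rho$ is a well-defined homomorphism.

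For each $u$, the restriction $\rho|_{G_u}=\Psi_v^{-1}\circ\Psi_u$ is a composition of isomorphisms, hence an isomorphism $G_u\cong G_v$. For $u=v$ it is the identity on $G_v$. Here we use that $G_v$, viewed as a subgroup of $G$, is the image of the vertex group under the natural map, which is injective for graphs of groups. Thus $\rho$ restricts to the identity on $G_v\le G$, and $\rho$ is a retraction.

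The only substantive point is that the edge relations are respected, and this is exactly what the compatibility condition in the definition of constant vertex groups supplies. Note that edges outside $T$ impose no extra condition: their stable letters are simply sent to $1$.
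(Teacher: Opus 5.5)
Your proposal is correct and matches the paper's proof: you define $\rho$ by $\Psi_v^{-1}\circ\Psi_u$ on each $G_u$ and set $\rho(t_e)=1$, then check the edge relations using the compatibility $\Psi_{\iota(e)}\circ\Phi_e^-=\Psi_{\tau(e)}\circ\Phi_e^+$. Your explicit checks of the tree relations $t_e=1$ and of the injectivity of $G_v\to G$ (needed to call $\rho$ a retraction) are details the paper leaves implicit.
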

\begin{proof}
Let $\Psi_u:G_u\to G_V$ be the isomorphism from each vertex group $G_u$ to the constant vertex group $G_V$.
Recall that $G$ is generated by vertex groups $G_u$ and $t_e$ for $e\in E(\Gamma)$.
Consider the retraction $\rho: G\to G_v$ defined by
\[
\rho(g) = 
\begin{cases}
\Psi_v^{-1}\circ \Psi_u(g), & \text{if $g\in G_u$ for $u\in V(\Gamma)$}\\
1, & \text{if $g= t_e$ for $e\in E(\Gamma)$}.
\end{cases}
\]
Clearly, the restriction of $\rho$ to each vertex group $G_u$ is an isomorphism. Since $\Psi_{\iota(e)} \circ \Phi_e^- = \Psi_{\tau(e)} \circ \Phi_e^+$, for each $g\in G_e$, we have
\[
\rho(t_e\Phi_e^-(g)t_e^{-1})
= \rho(\Phi_e^-(g))
= \Psi_v^{-1}\circ \Psi_{\iota(e)}\circ \Phi_e^-(g)
= \Psi_v^{-1}\circ \Psi_{\tau(e)}\circ \Phi_e^+(g)
= \rho(\Phi_e^+(g))
\]
and so $\rho$ is well-defined.
\end{proof}
\begin{lem}\label{retraction of space}
Let $X$ be a cube complex that splits as a graph of nonpositively curved cube complexes $X_\Gamma$ with constant vertex spaces. Then for each vertex space $X_v$, there exists a retraction $r: X\to X_v$ such that the following hold:
\begin{enumerate}
    \item The restriction of $r$ to each vertex space $X_u$ is a cubical isometry $X_u\cong X_v$.
    \item For any two parallel directed edges $\dr{a}\para \dr{b}$ of $X$, if $r(\dr{a})$ is an edge of $X_v$, then $r(\dr{a})\para_{X_v}r(\dr{b})$.
\end{enumerate}
\end{lem}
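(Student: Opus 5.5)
We define $r$ piece by piece, mirroring the group retraction of Lemma~\ref{retraction of group}. Let $\psi_u\colon X_u\to X_V$ be the given cubical isometries. On each vertex space set $r|_{X_u}=\psi_v^{-1}\circ\psi_u$. On each thickened edge space $X_e\times[0,1]$ set $r(x,t)=\psi_v^{-1}\circ\psi_{\iota(e)}\circ\phi_e^-(x)$, collapsing the interval factor.

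This agrees with the vertex-space definitions on both ends. At $t=0$ it is immediate, since $(x,0)$ is identified with $\phi_e^-(x)$. At $t=1$ we need
\[
\psi_v^{-1}\psi_{\iota(e)}\phi_e^-(x)=\psi_v^{-1}\psi_{\tau(e)}\phi_e^+(x),
\]
which is exactly the compatibility $\psi_{\iota(e)}\circ\phi_e^-=\psi_{\tau(e)}\circ\phi_e^+$ in Definition~\ref{def of constant}. So $r$ descends to a continuous map on the quotient $X$. It restricts to the identity on $X_v$, so it is a retraction, and it restricts to an isometry on each $X_u$, which gives (1).

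The map $r$ is cubical but possibly degenerate. It sends every cube of $X_u$ isomorphically to a cube of $X_v$. It sends a cube $Q\times[0,1]$ of $X_e\times[0,1]$ to the cube $\psi_v^{-1}\psi_{\iota(e)}\phi_e^-(Q)$ by the projection that forgets the $[0,1]$ coordinate. Consequently horizontal edges are collapsed to vertices, and every other edge maps to an edge.

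For (2), parallelism is generated by elementary parallelism, so I will reduce to a single square. Suppose $\dr a$ and $\dr b$ are opposite, consistently directed sides of a square $s$. There are three cases.
\begin{enumerate}
\item If $s$ lies in some $X_u$, then $r|_{X_u}$ is a cubical isometry onto $X_v$. It sends $s$ to a square of $X_v$ with $r(\dr a),r(\dr b)$ as consistently directed opposite sides, so $r(\dr a)\para_{X_v} r(\dr b)$ elementarily.
\item If $s=f\times[0,1]$ in a thickened edge space and $\dr a,\dr b$ are the vertical edges $f\times\{0\}$, $f\times\{1\}$, with the same direction on $f$, then by the agreement computed above both map to the same directed edge $\psi_v^{-1}\psi_{\iota(e)}\phi_e^-(\dr f)$. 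Hence $r(\dr a)=r(\dr b)$.
\item If $\dr a,\dr b$ are the horizontal sides of such a square, both map to vertices. Then neither image is an edge, and the square contributes nothing to the chain.
\end{enumerate}

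Now take a chain of elementary parallelisms $\dr a=\dr a_0,\dots,\dr a_n=\dr b$. Each elementary step preserves whether an edge is horizontal. So if $r(\dr a)$ is an edge, then $a$ is non-horizontal, hence every $a_i$ is non-horizontal and every step is of type (1) or (2). Concatenating gives $r(\dr a)\para_{X_v} r(\dr b)$.

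The main obstacle I expect is the bookkeeping of directions. Each step must yield \emph{directed} parallelism in $X_v$, not just parallelism, and orientation has to be transported consistently under $\phi_e^\pm$ in case (2). This is handled by the equality $r(f\times\{0\})=r(f\times\{1\})$ as directed edges.

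A second point to check is that a square of $X$ cannot mix a horizontal edge with one parallel to a vertical edge. This holds because, in $X_e\times[0,1]$, horizontal edges are dual only to the vertical hyperplane, which is the observation already used in Lemma~\ref{lemma of vertical hyperplane}.
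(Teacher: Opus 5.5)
Your proof is correct and follows the paper's argument: the same piecewise definition of $r$, well-definedness from $\psi_{\iota(e)}\circ\phi_e^-=\psi_{\tau(e)}\circ\phi_e^+$, and a chain of elementary parallelisms that contains no horizontal edges. Your case split by the type of square (a square of some $X_u$ versus a square $f\times[0,1]$) is slightly cleaner than the paper's split by which vertex spaces contain the two edges, because it also covers loop edges with $\iota(e)=\tau(e)$. There, two edges of the same $X_u$ can be opposite sides of a square $f\times[0,1]$ rather than of a square of $X_u$.
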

\begin{proof}
Let $\psi_u:X_u\to X_V$ be the cubical isometry from each vertex space $X_u$ to the constant vertex space $X_V$. Consider the retraction $r\colon X \to X_v$ defined as follows. On a vertex space $X_u$, set $r|_{X_u}=\psi_v^{-1}\circ \psi_u$.
On a thickened edge space $X_e\times [0,1]$, define
\[
r(a,t)=r(\phi_e^-(a))
=\psi_v^{-1}\circ \psi_{\iota(e)}\circ \phi_e^-(a),
\]
where $a\in X_e$ and $t\in [0,1]$.
Because $\psi_{\iota(e)} \circ \phi_e^- = \psi_{\tau(e)} \circ \phi_e^+$, the retraction $r$ is well-defined.
By our construction, the restriction of~$r$ to each vertex space $X_u$ is a cubical isometry $X_u\cong X_V\cong X_v$, and so (1) holds.

To prove (2), let $\dr{a} = \dr{e_0}\para \dr{e_1}\para \cdots \para \dr{e_n} = \dr{b}$ be a sequence of directed edges where~$\dr{e_{i-1}}, \dr{e_i}$ are elementary parallel (i.e. they appear as opposite sides of a square) for each~$i$. 
Note that if $\dr{a} = \{p\}\times [0,1]$ is a horizontal edge of $X_e\times [0,1]$, then $r(\dr{a}) = r(\phi_e^-(p))$ is a point. Thus, if $r(\dr{a})$ is an edge of $X_v$, then each $\dr{e_i}$ is dual to a hyperplane that is not the vertical hyperplane of any thickened edge space. In particular, each $\dr{e_i}$ is contained in a vertex space.

For each $i$, if $\dr{e_{i-1}}, \dr{e_i}$ are contained in the same vertex space $X_u$, then they are opposite sides of a square of $X_u$. By (1), $r$ induces a cubical isometry $X_u\cong X_v$, which sends the square to a square of $X_v$, and sends $\dr{e_{i-1}}, \dr{e_i}$ to elementary parallel edges $r(\dr{e_{i-1}})\para_{X_v} r(\dr{e_i})$ in $X_v$. 
If $\dr{e_{i-1}}, \dr{e_i}$ are contained in adjacent vertex spaces, say $\dr{e_{i-1}}\subset X_{\iota(e)}$ and $\dr{e_i}\subset X_{\tau(e)}$, then the square containing them has sides 
\[
e_{i-1} = \phi_e^-(f),\quad y\times [0,1],\quad e_i = \phi_e^+(f), \quad x\times [0,1]
\]
where $f$ is an edge of $X_e$ with endpoints $x,y$, and $\dr{e_{i-1}}, \dr{e_i}$ are either both directed from $x$ to $y$ or both from $y$ to $x$. Then 
\[
r(\dr{e_{i-1}})
= \psi_v^{-1}\circ \psi_{\iota(e)}\circ \phi_e^-(\dr{f})
= \psi_v^{-1}\circ \psi_{\tau(e)}\circ \phi_e^+(\dr{f})
= r(\dr{e_{i}}). 
\]
In particular, $r(\dr{e_{i-1}})\para_{X_v} r(\dr{e_i})$.
Thus, in both cases, we obtain parallel edges $r(\dr{e_{i-1}})\para_{X_v} r(\dr{e_i})$ of $X_v$ for each $i$. Therefore, the retraction $r$ sends the sequence of parallel edges $\dr{a} = \dr{e_0}\para \cdot \para \dr{e_n} = \dr{b}$ to a sequence of parallel edges $r(\dr{a}) = r(\dr{e_0})\para_{X_v} \cdots \para_{X_v} = r(\dr{e_n}) = r(\dr{b})$ of $X_v$.
Therefore, $\dr{a}$ and $\dr{b}$ have parallel images $r(\dr{a})\para_{X_v}r(\dr{b})$ in $X_v$.
\end{proof}

\begin{cor}\label{cor of special}
Let $X$ be a cube complex that splits as a graph of special cube complexes $X_\Gamma$ with constant vertex spaces. Then $X$ is special provided that the attaching maps of edge spaces satisfy the following:
\begin{enumerate}[label=(\arabic*)]
    \item the attaching map $X_e\to X_{\iota(e)}$ is an embedding for each $e\in E(\Gamma)$;
    \item \label{attach no inter-osculation}
    the image of the attaching map $X_e\to X_{\iota(e)}$ does not inter-osculate with hyperplanes of $X_{\iota(e)}$ for each $e\in E(\Gamma)$.
\end{enumerate}
\end{cor}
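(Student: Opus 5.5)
We check the four conditions of Definition~\ref{def of special} for every hyperplane of $X$. A hyperplane of $X$ is of one of two types. It is either the vertical hyperplane of some thickened edge space $X_e\times[0,1]$, or it is dual only to edges lying in vertex spaces; in the second case it is a union of hyperplanes of vertex spaces glued through horizontal hyperplanes of the thickened edge spaces.

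For vertical hyperplanes, Lemma~\ref{lemma of vertical hyperplane}(1) gives no self-crossing and 2-sidedness. Hypothesis (1), together with the fact that $\phi_e^+$ is also an embedding, feeds into Lemma~\ref{lemma of vertical hyperplane}(2) and excludes direct self-osculation. The embedding of $\phi_e^+$ holds because $\phi_e^+=\psi_{\tau(e)}^{-1}\circ\psi_{\iota(e)}\circ\phi_e^-$ by Definition~\ref{def of constant}. The same identity shows that $\phi_e^+(X_e)$ is the image of $\phi_e^-(X_e)$ under a cubical isometry $X_{\iota(e)}\cong X_{\tau(e)}$. Hypothesis~\ref{attach no inter-osculation} therefore transfers to the $\tau(e)$ side, and Lemma~\ref{lemma of vertical hyperplane}(3) shows that a vertical hyperplane does not inter-osculate with any hyperplane meeting a vertex space.

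Two distinct vertical hyperplanes never cross, since horizontal edges are never consecutive in a square. This settles every pair that involves a vertical hyperplane.

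For hyperplanes of the second type, the key tool is the retraction $r\colon X\to X_v$ of Lemma~\ref{retraction of space}. Each such edge $a$ lies in some $X_u$, so $r(a)$ is an edge of $X_v$.

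\emph{Self-crossing.} Suppose $H$ crosses itself at $(w;e_1,e_2)$. Then $e_1,e_2$ are vertical edges spanning a square inside a vertex space $X_u$, and $e_1\para e_2$. By Lemma~\ref{retraction of space}(2), $r(e_1)\para_{X_v} r(e_2)$. Since $r|_{X_u}$ is an isometry, $r(e_1),r(e_2)$ are consecutive in a square of $X_v$. So a hyperplane of $X_v$ crosses itself, which contradicts specialness of $X_v$.

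\emph{Two-sidedness.} If $\dr{e}\para\dl{e}$, applying $r$ gives $r(\dr e)\para_{X_v} r(\dl e)$, a one-sided hyperplane in $X_v$.

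\emph{Direct self-osculation.} Suppose $H$ directly self-osculates at $(w;\dr{e_1},\dr{e_2})$. Both edges are vertical, so they lie in the vertex space $X_u$ containing $w$. Their images under the isometry $r|_{X_u}$ are distinct, directed from $r(w)$, parallel in $X_v$, and not consecutive in any square of $X_v$: any such square would pull back through the isometry $r|_{X_u}$. This is direct self-osculation in $X_v$.

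\emph{Inter-osculation.} Suppose distinct $H_1,H_2$ of this type cross at $(w_1;a_1,b_1)$ and osculate at $(w_2;a_2,b_2)$. All these edges are vertical. At $w_2$, edges in a vertex space are consecutive in a square of $X$ only via squares of that vertex space. Hence $r(a_1),r(b_1)$ cross in $X_v$ and $r(a_2),r(b_2)$ osculate in $X_v$. The hyperplanes of $X_v$ dual to $r(a_1)\para r(a_2)$ and to $r(b_1)\para r(b_2)$ are distinct because they cross, and by specialness of $X_v$ a hyperplane does not cross itself. This is an inter-osculation in $X_v$, again a contradiction.

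The main obstacle I expect is the step where $r$ pushes everything down to a single vertex space. It needs the local facts that squares and non-squares at a vertex of $X_u$ correspond exactly under $r|_{X_u}$. One must also check that squares of $X$ involving two vertical edges at a vertex of $X_u$ lie in $X_u$; this holds because thickened edge squares always contain a horizontal edge. With those facts and Lemma~\ref{retraction of space}(2), each pathology in $X$ produces the same pathology in the special complex $X_v$.
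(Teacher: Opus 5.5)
Your treatment of hyperplanes dual to vertical edges is essentially the paper's argument: you push everything into $X_v$ with the retraction $r$ of Lemma~\ref{retraction of space}. There you are in fact more careful than the paper about non-consecutiveness and about the two image hyperplanes being distinct.

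The gap is the sentence ``Lemma~\ref{lemma of vertical hyperplane}(3) shows that a vertical hyperplane does not inter-osculate with any hyperplane meeting a vertex space.'' That lemma is about a hyperplane $H'$ \emph{of} $X_{\iota(e)}$. In its proof the crossing edge and the osculating edge both lie in $X_{\iota(e)}$ and are parallel \emph{inside} $X_{\iota(e)}$. Only then does one get a single hyperplane of $X_{\iota(e)}$ inter-osculating with $\phi_e^-(X_e)$.

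Now take an inter-osculation in $X$ between the vertical hyperplane $H_1$ of $X_e\times[0,1]$ and a hyperplane $H_2$ of $X$. Say they cross at $(x;e_1,e_2)$ with $e_1=\{p\}\times[0,1]$ and $e_2=\phi_e^{\pm}(g)$ for an edge $g$ of $X_e$ at $p$, and osculate at $(y;f_1,f_2)$. This only gives $e_2\para f_2$ in $X$. The chain of squares may leave through other thickened edge spaces and come back. Also $x$ and $y$ may lie at opposite ends of $X_e\times[0,1]$, say $x\in X_{\tau(e)}$ and $y\in X_{\iota(e)}$. A priori $H_2\cap X_{\iota(e)}$ could consist of several hyperplanes of $X_{\iota(e)}$, one meeting $\phi_e^-(X_e)$ and another osculating with it. In that situation hypothesis (2), even after you transfer it to the $\tau(e)$-side, says nothing.

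This is precisely the case the paper's proof works for. It applies the retraction $r\colon X\to X_{\iota(e)}$ and obtains $r(e_2)\para_{X_{\iota(e)}} r(f_2)$ from Lemma~\ref{retraction of space}(2). It then uses $\psi_{\iota(e)}\circ\phi_e^-=\psi_{\tau(e)}\circ\phi_e^+$ to check two things: $r(e_2)$ is still consecutive with $e_1$ at $r(x)=\phi_e^-(p)$, and $r(f_2)$ is still an edge at $r(y)$ lying outside $\phi_e^-(X_e)$. This gives an inter-osculation of $\phi_e^-(X_e)$ with one hyperplane of $X_{\iota(e)}$, contrary to (2).

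Your argument can be repaired with tools you already use:
\begin{enumerate}
\item A crossing at $\phi_e^+(p)$ via $\phi_e^+(g)$ is also a crossing at $\phi_e^-(p)$ via $\phi_e^-(g)$, through the square $g\times[0,1]$. So the crossing can be moved to the side containing $y$.
\item Lemma~\ref{retraction of space}(2), with base vertex the vertex space containing $y$, shows that two edges of a vertex space that are parallel in $X$ are parallel in that vertex space.
\end{enumerate}
As written, though, the vertical-versus-non-vertical inter-osculation case is not proved. That is the only place where hypothesis (2) is really needed.
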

\begin{proof}
We show that the four hyperplane pathologies in Definition \ref{def of special} cannot happen in~$X$. Choose a base vertex $v$ of $\Gamma$. Then Lemma \ref{retraction of space} yields a retraction $r: X\to X_v$. Because $X_v$ is special, the four hyperplane pathologies cannot happen in $X_v$. We show that each pathology in $X$ leads to a pathology in $X_v$ via the retraction map $r$.

\emph{Self-corssing:}
Suppose that a hyperplane $H$ of $X$ crosses itself at $(x; e_1, e_2)$ for parallel edges $e_1\para e_2$. By Lemma \ref{lemma of vertical hyperplane}, $H$ cannot be the vertical hyperplane of any thickened edge space $X_e \times [0,1]$. Thus, $e_1$ and $e_2$ are not horizontal edges of any $X_e \times [0,1]$, and hence belong to some vertex space $X_u$. Therefore, $r(e_1), r(e_2)$ are edges of the vertex space $X_v$. By Lemma \ref{retraction of space}, the restriction of $r$ to $X_u$ is an isometry $X_u \cong X_v$, sending $e_1\para e_2$ to parallel edges $r(e_1) \parallel_{X_v} r(e_2)$. Thus, the hyperplane of $X_v$ dual to $r(e_i)$ crosses itself at $(r(x); r(e_1), r(e_2))$.

\emph{$2$-sided:}
If a hyperplane $H$ of $X$ is $2$-sided, then there exists a directed edge $\dr{e}$ dual to $H$ such that $\dr{e} \parallel \dl{e}$. By Lemma \ref{lemma of vertical hyperplane}, $H$ is not a horizontal hyperplane of any thickened edge space. Thus, $e$ is not a horizontal edge of any $X_e \times [0,1]$, and so belongs to a vertex space $X_u$. Then the isometry $r|_{X_u}$ sends $\dr{e}$ to an edge $r(\dr{e})$ of $X_v$. By Lemma \ref{retraction of space}, the retraction $r$ sends $\dr{e} \parallel \dl{e}$ to parallel directed edges $r(\dr{e}) \para_{X_v} r(\dl{e})$ of $X_v$. In particular, the hyperplane of $X_v$ dual to $r(\dr{e})$ is $1$-sided.

\emph{Direct self-osculation:}
Because each $\phi_e^-: X_e\to X_{\iota(e)}$ is an embedding, the composition $\psi_{\iota(e)}\circ \phi_e^- = \psi_{\tau(e)}\circ \phi_e^+$ is an embedding for each $e$. In particular, each attaching map $\phi_e^+: X_e\to X_{\tau(e)}$ is also an embedding. Thus, all attaching maps are embeddings. 
Suppose that a hyperplane $H$ of $X$ directly self-osculates at $(x;\dr{e_1}, \dr{e_2})$, then $\dr{e_1}\para \dr{e_2}$. Because the attaching maps are embeddings, by Lemma \ref{lemma of vertical hyperplane}, $H$ cannot be a horizontal hyperplane of any thickened edge space. So $e_1,e_2$ belong to a vertex space $X_v$. The isometry $r|_{X_v}$ then sends $\dr{e_1}, \dr{e_2}$ to parallel edges $r(\dr{e_1})\para_{X_v}r(\dr{e_2})$ of $X_v$. Therefore, the hyperplane of $X_v$ dual to $r(\dr{e_1})$ directly self-osculates at $(r(x); r(\dr{e_1}), r(\dr{e_2}))$.
\begin{figure}
    \centering
    \includegraphics[width=0.75\linewidth]{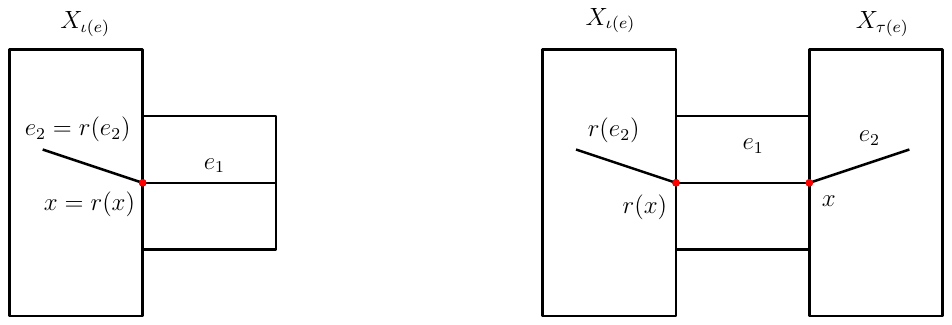}
    \caption{The left and right figures show the cases in which $e_2$ and $x$ belong to $X_{\iota(e)}$ and $X_{\tau(e)}$ respectively.}
    \label{fig: flip}
\end{figure}

\emph{Inter-osculation:}
If a two hyperplanes $H_1, H_2$ inter-osculate, say they cross at $(x;e_1,e_2)$ and osculate at $(y;f_1,f_2)$, where $e_i, f_i$ are dual to $H_i$. 
When neither of $H_1,H_2$ is a vertical hyperplane of any $X_e\times [0,1]$, then edges $e_1,e_2,f_1,f_2$ are not horizontal edges of any thickened edge space $X_e\times [0,1]$. Thus, they belong to edges of some vertex spaces, and their images $r(e_1), r(e_2), r(f_1), r(f_2)$ are edges of $X_v$. By Lemma \ref{retraction of space}, $r(e_1)\para_{X_v} r(f_1)$ and $r(e_2)\para_{X_v} r(f_2)$. In particular, the hyperplanes of $X_v$ dual to $r(e_1)$ and $r(e_2)$ cross at $(r(x); r(e_1), r(e_2))$ and osculate at $(r(y); r(f_1), r(f_2))$, and hence inter-osculate.

Now, assume that at least one of $H_1, H_2$ is a vertical hyperplane of some thickened edge space $X_e\times [0,1]$. Because vertical hyperplanes in distinct thickened edge spaces do not cross, $H_1, H_2$ cannot both be vertical hyperplanes. Assume that $H_1$ is the vertical hyperplane of $X_e\times [0,1]$ and $H_2$ is not a vertical hyperplane, then the two edges $e_2,f_2$ belong to some vertex spaces (they may belong to distinct vertex spaces). By Lemma~\ref{retraction of space}, the retraction $r: X\to X_{\iota(e)}$ sends parallel edges $e_2\para f_2$ to parallel edges $r(e_2)\para_{X_{\iota(e)}} r(f_2)$ of $X_{\iota(e)}$. 
Furthermore, recall that $e_1,e_2$ are incident at $x$. Suppose $e_1 = \{p\}\times [0,1]$ for some vertex $p\in V(X_e)$. If $e_2\subset X_{\iota(e)}$, then $x = \phi_e^-(e_1)\in X_{\iota(e)}$. In this case, $e_2 = r(e_2)$ and $e_1$ are incident at $x = r(x)$. If $e_2\subset X_{\tau(e)}$, then $x = \phi_e^+(e_2)\in X_{\tau(e)}$. In particular, $r(x)\subset r(e_2)$ and 
\[
r(x) 
= \psi_{\iota(e)}^{-1}\circ \psi_{\tau(e)} \circ \phi_e^+(p)
= \psi_{\iota(e)}^{-1}\circ \psi_{\iota(e)} \circ \phi_e^-(p)
= \phi_e^-(p)\in e_1.
\]
Thus, in both cases, $r(e_2)$ and $e_1$ are incident at the vertex $r(x)$ as shown in Figure \ref{fig: flip}. Similarly, $r(f_2)$ and $f_1$ are incident at the vertex $r(y)$. Therefore, the hyperplane of $X_{\iota(e)}$ dual to $r(e_2)$ inter-osculate with the vertical hyperplane $H$ of $X_e\times [0,1]$. 
However, condition \ref{attach no inter-osculation} implies that the image of the attaching map $X_e\to X_{\iota(e)}$ does not inter-osculate with hyperplanes of $X_{\iota(e)}$.
By Lemma \ref{lemma of vertical hyperplane}, $H$ therefore does not inter-osculate with any hyperplane of $X_{\iota(e)}$, a contradiction. 

Therefore, each hyperplane pathology of $X$ leads to either the same hyperplane pathology in a vertex space $X_v$ or a contradiction of condition \ref{attach no inter-osculation}. Since the vertex space $X_v$ is special, this completes the proof.
\end{proof}

\begin{rmk}
The assumption of constant vertex groups can be relaxed under stronger hypotheses on the attaching maps. For example, see \cite[Theorem 5.2]{HaglundWise2010coxeter}.
\end{rmk}

\section{Locally constant vertex spaces and groups}\label{section: locally constant}
\begin{defn}\label{def of locally constant}
A graph of nonpositively curved cube complexes $X_\Gamma$ is said to have \emph{locally constant vertex spaces} if there exists a cubical isometry $\theta_e: X_{\iota(e)}\to X_{\tau(e)}$ for each edge $e\in E(\Gamma)$ such that $\theta_e\circ \phi_e^- = \phi_e^+$.

A graph of groups $G_\Gamma$ is said to have \emph{locally constant vertex groups} if there exists an isomorphism $\Theta_e: G_{\iota(e)}\to G_{\tau(e)}$ for each edge $e\in E(\Gamma)$ such that $\Theta_e\circ \Phi_e^- = \Phi_e^+$.
\end{defn}

\begin{rmk}
Let $X_\Gamma$ be a graph of nonpositively curved cube complexes. If $X_\Gamma$ has constant vertex spaces, then it also has locally constant vertex spaces with the isometry $\theta_e = \psi_{\tau(e)}^{-1}\circ \psi_{\iota(e)}: X_{\iota(e)}\to X_{\tau(e)}$ for each edge $e\in E(\Gamma)$. Similarly, a graph of groups with constant vertex groups also has locally constant vertex groups. 
\end{rmk}

Let $X_\Gamma$ be a graph of nonpositively curved cube complexes with locally constant vertex spaces. Fix a base vertex $v\in V(\Gamma)$. Suppose that $\gamma: [0,n]\to \Gamma$ is a loop of $\Gamma$ based at $\gamma(0) = \gamma(n) = v$, which passes through edges $e_1 = \gamma[0,1],\dots, e_n = \gamma[n-1,n]$ in order. Since each $e_i$ gives an isometry $\phi_{e_i}: X_{\iota(e_i)}\to X_{\tau(e_i)}$, the loop $\gamma$ yields an a composition of isometries 
\[
\theta_{\gamma}: X_v = X_{\gamma(0)}\to X_{\gamma(1)}\to \cdots \to X_{\gamma(n)} = X_v,
\]
where each map in the composition is some $\theta_{e_i}$ or its inverse. Note that the isometry depends only on the homotopy class of $\gamma$. Hence, this defines a homomorphism
\[
\theta: \pi_1(\Gamma,v)\to \Aut(X_v),
\]
where $\Aut(X_v)$ is the set of combinatorial automorphisms of $X_v$. 

Similarly, let $G_\Gamma$ be a graph of groups with locally constant vertex groups, and fix a base vertex $v\in V(\Gamma)$. Each loop $\gamma$ of $\Gamma$ based at $v$ gives an automorphism $\Theta_\gamma: G_v\to G_v$, which depends only on the homotopy class of $\gamma$. This defines a homomorphism 
\[
\Theta: \pi_1(\Gamma,v)\to \Aut(G_v). 
\]

\begin{defn}
Let $X_\Gamma$ be a graph of nonpositively curved cube complexes with locally constant vertex spaces, and let $G_\Gamma$ be a graph of groups with locally constant vertex groups. Fix a base vertex $v\in V(\Gamma)$.
The \emph{monodromy group} of $X_\Gamma$ at $v$ is the image the homomorphism $\theta: \pi_1(\Gamma,v) \to \Aut(X_v)$. 
The \emph{monodromy group} of $G_\Gamma$ at $v$ is the image of the homomorphism $\Theta: \pi_1(\Gamma,v) \to \Aut(G_v)$.
\end{defn}
\begin{rmk}
For different choices of base vertices, the resulting monodromy groups are well defined up to conjugacy. Thus, we may speak of the monodromy group without specifying a base vertex.
\end{rmk}

\begin{lem}\label{lemma: local become global}
Given a cube complex $X$ that splits as a graph of nonpositively curved cube complexes $X_\Gamma$ with locally constant vertex spaces, and a group $G$ that splits as a graph of groups $G_\Gamma$ with locally constant vertex groups.
\begin{enumerate}
    \item If $G_\Gamma$ (resp. $X_\Gamma$) has trivial monodromy group, then $G_\Gamma$ has constant vertex groups (resp. $X_\Gamma$ has constant vertex spaces).
    \item If $G_\Gamma$ has finite monodromy group, then $G$ has a finite index subgroup $\hG$ which splits as a graph of groups with constant vertex groups.
    \item If $X_\Gamma$ has compact vertex spaces, then $X$ has a finite sheeted cover $\hX$ which splits as a graph of nonpositively curved cube complexes with constant vertex spaces.
\end{enumerate}
\end{lem}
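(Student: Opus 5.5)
Part (1) is a direct construction. Suppose the monodromy is trivial. Fix a base vertex $v$ and a spanning tree $T$ of $\Gamma$, and set $X_V := X_v$. For each vertex $u$, let $\gamma_u$ be the unique reduced path in $T$ from $v$ to $u$. Composing the isometries $\theta_{e}$ (or their inverses) along $\gamma_u$ gives an isometry $X_v \to X_u$; let $\psi_u\colon X_u\to X_v$ be its inverse.

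For an edge $e$, consider the loop $\gamma_{\iota(e)}\cdot e\cdot \gamma_{\tau(e)}^{-1}$. By trivial monodromy,
\[
\psi_{\tau(e)}\circ\theta_e=\psi_{\iota(e)}.
\]
Combining this with $\theta_e\circ\phi_e^-=\phi_e^+$ gives
\[
\psi_{\tau(e)}\circ\phi_e^+=\psi_{\tau(e)}\circ\theta_e\circ\phi_e^-=\psi_{\iota(e)}\circ\phi_e^-,
\]
which is exactly the compatibility required in Definition~\ref{def of constant}. The group version is verbatim, with $\Theta_e$ and $\Psi_u$ in place of $\theta_e$ and $\psi_u$.

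For (2) and (3), I would pass to a finite cover of the underlying graph that kills the monodromy. Let $K=\ker\theta$ (resp. $\ker\Theta$). In case (3), $X_v$ is compact, so $\Aut(X_v)$ is finite; in case (2) finiteness is the hypothesis. Either way $K$ has finite index in $\pi_1(\Gamma,v)$. Let $p\colon\hGamma\to\Gamma$ be the corresponding finite cover.

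Next, build the pulled-back graph of spaces $\hX_{\hGamma}$: put $\hX_{\hu}=X_{p(\hu)}$ and $\hX_{\he}=X_{p(\he)}$, with the same attaching maps. Its total space $\hX$ is a finite-sheeted cover of $X$, obtained by gluing copies of the vertex and thickened edge spaces along the combinatorics of $\hGamma$; the covering map is induced by $X\to\Gamma$. Similarly, the pulled-back graph of groups has fundamental group $\hG$, a finite-index subgroup of $G$: it is the preimage of $K$ under the natural surjection $G\to\pi_1\Gamma$.

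The pulled-back splitting is still locally constant, with $\theta_{\he}:=\theta_{p(\he)}$. Its monodromy at a lift $\hv$ of $v$ is $\theta\circ p_*$ on $\pi_1(\hGamma,\hv)$. This is trivial because $p_*\pi_1(\hGamma,\hv)=K$, and here the choice of lift matters only up to conjugacy, which preserves triviality since $K$ is normal. Applying (1) to the pulled-back splitting then gives constant vertex spaces (resp. constant vertex groups).

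The main obstacle I expect is carefully justifying that the pulled-back total space is a genuine finite cover of $X$ (resp. a finite-index subgroup of $G$). I would handle this via the standard fact that covers of $\Gamma$ pull back to covers of the total space along the map $X\to\Gamma$ collapsing vertex spaces. For groups, I would use the Bass–Serre tree quotient: the preimage of $K$ acts on the Bass–Serre tree of $G$ with quotient graph $\hGamma$ and vertex stabilizers conjugates of the $G_u$, which are unchanged because $G_u$ lies in the kernel of $G\to\pi_1\Gamma$.
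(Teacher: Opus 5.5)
Your proposal is correct and follows essentially the paper's proof. For (1) you transport along paths to a base vertex, with your spanning-tree choice replacing the paper's path-independence check; for (2) you pass to $\ker(\Theta\circ\pi)$, whose induced splitting lives over a cover of $\Gamma$ with trivial monodromy, and then apply (1). The one mild difference is (3): you pull back the graph of spaces along $\hGamma\to\Gamma$ directly, whereas the paper deduces it from the group statement (2), so your route avoids converting the cubical monodromy $\theta$ into a group monodromy and then recovering cubical isometries from constant vertex groups.
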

\begin{proof}
When the monodromy group of $G_\Gamma$ is trivial. Choose a vertex $v$ of $\Gamma$ and take the constant vertex group to be $G_V = G_v$. For each vertex group $G_u$, there exists a path $\gamma_{u,v}: [0,n]\to \Gamma$ from $u$ to $v$ passing through edges $e_1,\cdots, e_n$. This gives a composition of isomorphisms
\[
\Psi_u := \Theta_{\gamma_{u,v}}: G_{u} = G_{\gamma(0)}\to G_{\gamma(1)}\to \cdots \to G_{\gamma(n)} = G_v,
\]
where each isomorphism $G_{\gamma(i-1)}\to G_{\gamma(i)}$ is either $\Theta_{e_i}$ or $\Theta_{e_i}^{-1}$. Because $\Theta: \pi_1(\Gamma,v)\to \Aut(G_v)$ has trivial image, the isomorphism $\Psi_u$ is independent of the choice of path. Indeed, for any other path $\xi_{u,v}$ from $u$ to $v$ and the associated isomorphism $\xi_{u,v}: G_u\to G_v$, let $\overline{\xi}_{u,v}$ be the opposite path from $v$ to $u$. Then the concatenation $\overline{\xi}_{u,v}*\gamma_{u,v}$ is a loop based at $v$, and so the composition 
\[
\Theta_{\overline{\xi}_{u,v}*\gamma_{u,v}} 
= \Theta_{\gamma_{u,v}}\circ \Theta_{\overline{\xi}_{u,v}}
= \Theta_{\gamma_{u,v}} \circ \Theta_{\xi_{u,v}}^{-1}
\]
is the identity element in $\Aut(G_v)$. Thus, $\Theta_{\xi_{u,v}} = \Theta_{\gamma_{u,v}}$.

Now, for each edge $e$ of $\Gamma$, the monomorphisms $\Phi_e^-: G_e\to G_{\iota(e)}$ and $\Phi_e^+: G_e\to G_{\tau(e)}$ satisfy $\Theta_e\circ\Phi_e^- = \Phi_e^+$. If $\gamma$ is a path from $\tau(e)$ to $v$, then $e*\gamma$ is a path from $\iota(e)$ to $v$. Thus, by our construction, we have
\[
\Psi_{\tau(e)} \circ \Phi_e^+
= \Theta_{\gamma} \circ \Phi_e^+
= \Theta_{\gamma} \circ \Theta_e\circ\Phi_e^-
= \Theta_{e*\gamma}\circ \Phi_e^-
= \Psi_{\iota(e)} \circ \Phi_e^-.
\]
This shows that $G_\Gamma$ has constant vertex groups. The argument for graph of nonpositively curved cube complexes $X_\Gamma$ is similar, with isomorphisms replaced by cubical isometries of vertex spaces. This proves (1).

For (2), let $T$ be a spanning tree of the underlying graph $\Gamma$. Then the set of edges of $\Gamma\setminus T$ forms a generating set for $\pi_1\Gamma$.
Consider the homomorphism $\pi: G\to \pi_1(\Gamma,v)$, which sends each vertex group to identity and each $t_e$ to the generator corresponding to~$e$. Consider the composition
\[
f = \Theta\circ \pi: G\to \pi_1(\Gamma,v)\to \Aut(G_v).
\]
Because the monodromy group $\Theta(\pi_1(\Gamma,v))$ is finite, the kernel $\hG = \ker(f)$ of $f$ has finite index in $G$. We claim that the induced splitting of $\hG$ as a graph of groups has constant vertex groups. 

Let $\cT$ be the Bass--Serre tree associated to $G_\Gamma$. Restricting the action of $G$ on $\cT$ to $\hG$ yields a splitting of $\hG$ as a graph of groups $\hG_{\hGamma}$ over the quotient graph $\hGamma=\hG\backslash \cT$. Let $p: \hGamma \to \Gamma = G\backslash\cT$ be the covering map. For each vertex $\hv$ of $\hGamma$, let $\tv$ be a lift of $\hv$ in $\cT$. Then the vertex group over $\hv$ is
\[
\hG_{\hv} = \stab_{\hG}(\tv) = \hG\cap \stab_G(\tv) = \hG\cap G_{p(\tv)}.
\]
Because $\pi$ sends the vertex group $G_{p(\tv)}$ to identity, $G_{p(\tv)}\subset \ker(\pi)\subset \ker(f) = \hG$. Thus, the vertex group over each vertex $\hv$ is $\hG_{\hv} = G_{p(\hv)}$. Similarly, the edge group over each edge $\he$ is $\hG_{\he} = G_{p(\he)}$, together with monomorphisms $\hPhi_{\he}^- = \Phi_e^-$ and $\hPhi_{\he}^+ = \Phi_e^+$.
As a result, for each edge $\he$ of $\hGamma$, the isomorphism $\Theta_{p(\he)}: G_{\iota(p(\he))}\to G_{\tau(p(\he))}$ gives an isomorphism
\[
\hTheta_{\he} = \Theta_{p(\he)}: \hG_{\iota(\he)} = G_{\iota(p(\he))}\to  G_{\tau(p(\he))}= \hG_{\tau(\he)},
\]
with $\hTheta_{\he}\circ \hPhi_{\he}^-
= \Theta_{p(\he)}\circ \Phi_{p(\he)}^-
= \Phi_{p(\he)}^+
= \hPhi_{\he}^+$.
This proves that the splitting $\hG_{\hGamma}$ of $\hG$ has locally constant vertex groups. 

Moreover, fix a vertex $\hv$ of $\hGamma$. For each loop $\hgamma$ based at $\hv$ that passes through edges $\he_1,\cdots, \he_n$, its projection $p(\hgamma)$ in $\Gamma$ is a loop based at $p(\hv)$ passing through edges $p(\he_1),\cdots, p(\he_n)$. Thus, the homomorphism $\Theta: \pi_1(\hGamma,\hv)\to \Aut(G_{\hv})$ factors through as 
\[
\hTheta = \Theta\circ p_*: \pi_1(\hGamma,\hv)\to \pi_1(\Gamma,v)\to \Aut(G_{p(\hv)}) = \Aut(\hG_{\hv}),
\]
where $p_*$ is induced by the covering map $p: \hGamma = \hG\backslash \cT\to \Gamma = G\backslash \cT$. We therefore obtain the following commutative diagram.
\[\begin{tikzcd}
	\hG & G & \\
	{\pi_1(\hGamma, \hv)} & {\pi_1(\Gamma,v)} & {\Aut(G_v)}
	\arrow[hook, from=1-1, to=1-2]
	\arrow["{\widehat{\pi}}"', two heads, from=1-1, to=2-1]
	\arrow["\pi", two heads, from=1-2, to=2-2]
	\arrow["f", from=1-2, to=2-3]
	\arrow["{p_*}"', from=2-1, to=2-2]
	\arrow["\Theta"', from=2-2, to=2-3]
\end{tikzcd}\]
Recall that $\hG = \ker(f)$.
Because $\widehat{\pi}$ and $\pi$ are surjective, we have 
\[
\ima(p_*)
= p_*\circ \widehat{\pi}(\hG)
= \pi(\hG) 
= \pi(\ker(f)) \subset  \ker(\Theta).
\]
In particular, $\hTheta = \Theta \circ p_*$ sends $\pi_1(\hGamma,\hv)$ to identity. Therefore, $\hG_{\hGamma}$ has trivial monodromy group, and hence has constant vertex groups by (1).

For (3), the graph of nonpositively curved cube complex $X_\Gamma$ yields a splitting of $\pi_1X$ as a graph of groups with locally constant vertex groups. Furthermore, the monodromy group lies in $\Aut(X_v)$ for a chosen base vertex $v$. When $X_v$ is compact, the automorphism group $\Aut(X_v)$ is finite, and so $X_\Gamma$ has finite monodromy group. Thus, by (2), $\pi_1X$ admits a finite index subgroup which splits as a graph of groups with constant vertex groups. The corresponding finite-sheeted cover $\hX$ of $X$ then admits a splitting with constant vertex spaces. 
\end{proof}

\section{Virtual specialness of graphs of cube complexes with locally constant vertex spaces}\label{section: virtually special}
In this section, we prove our main theorems. We first review the statement of Theorem~\ref{cubical thm}.
\mainthm*
\begin{proof}
By Lemma \ref{lemma: local become global}(3), it suffices to assume that $X_\Gamma$ has constant vertex spaces with constant vertex space $X_V$, together with cubical isometries $\psi_u: X_u\to X_V$ from each vertex space $X_u$.
Choose a base vertex $v\in V(\Gamma)$. By Lemma \ref{retraction of space}, there exists a retraction $r: X\to X_v$ whose restriction to each vertex space $X_u$ is an isometry.
\[
r|_{X_u} = \psi_{v}^{-1}\circ \psi_u: X_u\to X_v.
\]
For each edge space $X_e$, the attaching maps $\phi_e^-: X_e\to X_{\iota(e)}$ and $\phi_e^+: X_e\to X_{\tau(e)}$ give a local isometry
\[
f_e := \psi_{v^{-1}}\circ \psi_{\iota(e)}\circ \phi_e^- =\psi_v^{-1}\circ \psi_{\tau(e)}\circ \phi_e^+: X_e\to X_v.
\]
Because $X$ is compact, $\Gamma$ has finitely many edges $e_1,\cdots, e_n$, which give finitely many local isometries $X_{e_1},\cdots, X_{e_n}\to X_v$ of virtually special cube complexes. By Lemma \ref{lemma of interosculation}, there exists a compact regular special cover $\hX_v$ of $X_v$ such that all elevations of $X_{e_i}$ to $\hX_v$ are embedded and do not inter-osculate with hyperplanes of $X_v$. 

By Lemma \ref{retraction of group}, there exists a retraction $\rho: \pi_1X\to \pi_1X_v$, whose restriction to each vertex group $\pi_1X_u$ is induced by $r|_{X_u}$. Take the subgroup $\hG = \rho^{-1}(\pi_1\hX_v)$. Since $\pi_1\hX$ has finite index in $\pi_1X$, $\hG$ also has finite index in $\pi_1X$. Let $\hX$ be the corresponding finite-sheeted cover of $X$. 

We claim that $\hX$ is special. 
To see this, let $\hX_v\times_{X_v}X$ be the fiber product with respect to the covering $\pi: \hX_v\to X_v$ and the retraction $r: X\to X_v$. By Remark \ref{remark of elevation and subgroup}, $\hX_v\times_{X_v}X$ is the covering $\hX$ of $X$ corresponding to $\hG$. The splitting $X_\Gamma$ of $X$ then induces a splitting of $\hX$ as a graph of nonpositively curved cube complexes $\hX_{\hGamma}$. Since $r|_{X_u}$ is an isometry to $X_v$ for each vertex space $X_u$, the preimage of each vertex space $X_u$ is 
\[
\hX_v\times_{X_v} X_u 
= \{(x,y)\in \hX_v\times X_u \mid \pi(x) = r(y)\}
\cong \hX_v.
\]
In particular, each vertex spaces of $\hX_{\hGamma}$ is isometric to $\widehat X_v$. Similarly, the preimage of each thickened edge space $X_e\times [0,1]$ is 
\[
\hX_v\times_{X_v} (X_e\times [0,1]) = (\hX_v\times _{X_v} X_e)\times [0,1],
\]
where the attaching maps are elevations of $X_e\to X_v$ to $\hX_v$. Hence $\widehat X$ is a graph of special cube complexes with constant vertex space $\widehat X_v$. 

By the choice of $\widehat X_v$, all these elevated attaching maps are embeddings, and their images do not inter-osculate with hyperplanes of the corresponding vertex spaces. Therefore, by Corollary \ref{cor of special}, the cube complex $\widehat X$ is special and $X$ is virtually special.
\end{proof}

\begin{rmk}
In general, if we do not assume locally constant vertex spaces, the virtual specialness of $X$ may fail even if the splitting has isometric vertex spaces. For example, in \cite[Part \RNum{2}, Section 2]{Wise1996NPCSquareComplexes} (see also \cite[Section 4]{Wise2007CompleteSquareComplexes}), Wise constructed a compact nonpositively curved square complex $X$, which splits as a graph of graphs whose underlying graph is a loop. In particular, the splitting has a single vertex space. However, since $X$ is not virtually clean, it is not virtually special. In fact, the fundamental group $\pi_1X$ is not even virtually special since it is not residually finite \cite[Corollary 1.4]{CapraceWesolek2018Indicability}.
\end{rmk}

\begin{cor}\label{cor of Gromov-hyperbolic}
Let $G$ be the fundamental group of a finite graph of groups with locally constant vertex groups $G_\Gamma$. Suppose that the monodromy group of $G_\Gamma$ is finite.
If the vertex groups are virtually compact special Gromov-hyperbolic, and the edge groups are quasiconvex in adjacent vertex groups, then $G$ is virtually compact special.
\end{cor}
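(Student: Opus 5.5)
By Lemma~\ref{lemma: local become global}(2), $G$ has a finite index subgroup $\hG$ that splits as a finite graph of groups $\hG_{\hGamma}$ with constant vertex groups. The vertex groups of $\hG_{\hGamma}$ are vertex groups of $G_\Gamma$, and the edge groups of $\hG_{\hGamma}$ are edge groups of $G_\Gamma$ with the same monomorphisms. The graph $\hGamma$ is finite because $\hG$ has finite index in $G$. Since virtual compact specialness passes from $\hG$ to $G$, it suffices to prove the corollary under the extra assumption that $G_\Gamma$ has constant vertex groups. In that case there is a single group $G_V$ with isomorphisms $\Psi_u\colon G_u\to G_V$ satisfying $\Psi_{\iota(e)}\circ\Phi_e^-=\Psi_{\tau(e)}\circ\Phi_e^+$.

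Next I would realize this graph of groups geometrically and invoke Theorem~\ref{cubical thm}. Since $G_V$ is virtually compact special and hyperbolic, it has a finite index subgroup $K=\pi_1 Y$ with $Y$ a compact special cube complex. I would pass to a further finite index subgroup of $G$ so that the vertex groups become $K$. Apply the retraction $\rho\colon G\to G_v\cong G_V$ from Lemma~\ref{retraction of group} and set $G'=\rho^{-1}(\Psi_v^{-1}K)$. This has finite index in $G$. Because $\rho|_{G_u}=\Psi_v^{-1}\circ\Psi_u$, we get $G'\cap G_u=\Psi_u^{-1}(K)$ and, for each edge, $G'\cap G_e$ equals the common preimage of $K$ under $\Psi_{\iota(e)}\circ\Phi_e^-=\Psi_{\tau(e)}\circ\Phi_e^+$. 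As in the proof of Lemma~\ref{lemma: local become global}(2), take the induced splitting of $G'$ on the Bass--Serre tree. Its vertex groups are conjugates of $G'\cap G_u$, which are all identified with $K$. A short check of the kind done in Lemma~\ref{lemma: local become global}(2) should show that the splitting has locally constant vertex groups, with each edge group embedded in $K$ by compatible maps. Quasiconvexity is preserved under finite index passage.

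Now build the cube complex. Each edge group $E\le K$ is quasiconvex in the hyperbolic group $K=\pi_1Y$. By Haglund's theorem, or Sageev--Wise cores, some finite cover of $Y$ contains a compact locally convex subcomplex carrying $E$. More directly, there is a compact nonpositively curved cube complex $Y_E$ with a local isometry $Y_E\to Y$ inducing $E\hookrightarrow K$. Take every vertex space to be a copy of $Y$, identified by the isometries coming from the locally constant structure. Since these isomorphisms are induced by $\Psi$ and fix $K$, they can be taken to be the identity on $Y$. Take the edge spaces to be $Y_E\times[0,1]$, attached by the local isometry $Y_E\to Y$ at both ends, conjugated by the identifications. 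The total space $X$ is then a compact cube complex with $\pi_1X\cong G'$. It splits as a graph of virtually special cube complexes with locally constant vertex spaces, and each $Y_E$ is virtually special because it locally isometrically immerses into a special complex. Theorem~\ref{cubical thm} then gives that $X$ is virtually special. Since $X$ is compact, $G'$ and hence $G$ is virtually compact special.

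The main obstacle is making the geometric realization respect the compatibility condition. The two attaching maps of each edge space must agree after composing with the vertex identifications, i.e.\ $\theta_e\circ\phi_e^-=\phi_e^+$, and not merely agree up to conjugation in $K$ or homotopy. This is why I first reduce all vertex groups to literally the same group $K$ with $\Phi^\pm_e$ differing only by the identifications $\Psi_u$. Then a single choice of $Y_E\to Y$ with a fixed basepoint serves both ends. Some care is needed with basepoints and with conjugates appearing in the Bass--Serre splitting of $G'$. I would handle this by choosing lifts in the tree so that edge groups sit inside $K$ via the restricted $\Phi_e^\pm$. Conjugation elements $t_e$ map to $1$ under $\rho$, so no twisting is introduced.
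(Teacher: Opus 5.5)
Your proposal is correct and follows the paper's proof essentially step for step. You reduce to constant vertex groups via Lemma~\ref{lemma: local become global}(2), pull back a compact special finite-index subgroup $K=\pi_1Y$ through the retraction $\rho$ of Lemma~\ref{retraction of group}, realize the quasiconvex edge groups by compact local isometries $Y_E\to Y$ used at both ends (the paper cites \cite[Proposition 7.2]{haglund-wise2012}), and then apply Theorem~\ref{cubical thm}; your explicit attention to using the same based map at both ends, so that compatibility holds on the nose, makes precise a point the paper leaves implicit.
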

\begin{proof}
By Lemma \ref{lemma: local become global}(2), it suffices to assume that $G_\Gamma$ has constant vertex groups.
Let $G_v$ be a vertex group, which is virtually compact special Gromov-hyperbolic by assumption. By Lemma \ref{retraction of group}, there exists a retraction $\rho: G\to G_v$. 
Let $\hV$ be a finite index subgroup of $G_v$ that is compact special. Then there exists a compact special cube complex $X_v$ with $\hV = \pi_1X_v$. Take the finite index subgroup $\hG = \rho^{-1}(\hV)$. 

We show that $\hG$ is compact special. Denote by $G_\Gamma$ the splitting of $G$ as a graph of groups, with the associated Bass--Serre tree $\cT$. Then it induces a splitting of $\hG$ over the underlying graph $\hGamma = \hG\backslash\cT$. Denote by $p: \hGamma = \hG\backslash \cT\to \Gamma = G\backslash \cT$ the projection. Then the vertex group over each $\hu\in V(\Gamma)$ is
\[
\hG_{\hu} = G_{p(\hu)}\cap \hG = G_{p(\hu)}\cap \rho^{-1}(\hV),
\]
and the edge group over each $\he\in E(\Gamma)$ is 
\[
\hG_{\he} = G_{p(\he)}\cap \hG = G_{p(\he)}\cap \rho^{-1}(\hV).
\]
The restriction of $\Phi_{p(\he)}^-, \Phi_{p(\he)}^+$ to $\hG_{\he}$ give monomorphisms
\[
\hPhi_{\he}^- = \Phi_{p(\he)}^-|_{\hG_{\he}}: \hG_{\he}\to \hG_{\iota(\he)},\quad
\hPhi_{\he}^+ = \Phi_{p(\he)}^+|_{\hG_{\he}}: \hG_{\he}\to \hG_{\tau(\he)}.
\]
Because the restriction of $\rho$ to each vertex group $G_u$ is an isomorphism $G_u\cong G_v$, its restriction to the subgroup $\hG_{\hu} = G_{p(\hu)}\cap \rho^{-1}(\hV)$ is an isomorphism $\hPsi_{\hu}: \hG_{\hu}\to \hV$ with $\hPsi_{\iota(\he)}\circ \hPhi_{\he}^- = \hPsi_{\tau(\he)}\circ \hPhi_{\he}^+$. This shows that the splitting $\hG_{\hGamma}$ of $\hG$ is a graph of groups with constant vertex groups.

Finally, since each edge group $G_e$ is quasiconvex in $G_{\iota(e)}, G_{\tau(e)}$, the finite index subgroup $\hG_{\he}\leq G_{p(\he)}$ is quasiconvex in $\hG_{\iota(\he)}, \hG_{\tau(\he)}$. 
Since $\hV=\pi_1\hX_v$, where $\hX_v$ is a compact special cube complex, for each edge group $\hG_{\he}$ there exists a compact cube complex $\hX_{\he}$ and a local isometry $f_e:\hX_{\he}\to \hX_v$ such that $(f_e)_*$ maps $\pi_1\hX_{\he}$ isomorphically onto $\hG_{\he}$ \cite[Proposition 7.2]{haglund-wise2012}.
Thus, the graph of groups $\hG_{\hGamma}$ gives rise to a graph of special cube complexes $\hX_{\hGamma}$ with the constant vertex space $\hX_v$ and edge spaces $\hX_{\he}$. Let $\hX$ be the total space of $\hX_{\hGamma}$, then $\hG = \pi_1\hX$. 
By our construction, $\hX_{\hGamma}$ has constant vertex spaces. Hence, by Theorem \ref{cubical thm}, $\pi_1\hX$ is virtually compact special. Since $\hG = \pi_1\hX$ has finite index in $G$, it follows that $G$ is virtually compact special.
\end{proof}

\begin{proof}[Proof of Theorem \ref{general result}]
By Lemma \ref{lemma: local become global}, a graph of groups $G_\Gamma$ with constant vertex groups has locally constant vertex groups with trivial monodromy group. The statement then follows from Corollary \ref{cor of Gromov-hyperbolic}.
\end{proof}
\begin{proof}[Proof of Theorem \ref{special of double}]
The statement follows from Theorem \ref{general result} since the double $G*_HG$ is the fundamental group of a graph of groups with constant vertex groups, whose underlying graph consists of a single edge joining two vertices.
\end{proof}

\section{Remarks on finite stature}\label{section: finite stature}
The following notion was introduced in \cite{HuangWise2019Stature}.
\begin{defn}
Let $G$ be a group and let $\{H_\lambda\}_{\lambda\in \Lambda}$ be a collection of subgroups of $G$. Then $(G,\{H_\lambda\}_{\lambda\in \Lambda})$ has \emph{finite stature} if for each $\mu\in \Lambda$, there are finitely many $H_\mu$-conjugacy classes of infinite subgroups of the form $H_\mu\cap C$, where $C$ is an intersection of $G$-conjugates of elements of $\{H_\lambda\}_{\lambda\in \lambda}$.
\end{defn}

The relation between finite stature and virtual specialness is given by the following:
\begin{thm}[{\cite[Theorem 1.4]{Huang--Wise2024fintiestature}}]\label{relation thm}
Let $X$ be a compact cube complex that splits as a graph of nonpositively curved cube complexes. Suppose each vertex group is Gromov-hyperbolic. Then the following are equivalent.
\begin{enumerate}
    \item $\pi_1X$ has finite stature with respect to the vertex groups.
    \item $X$ is virtually special.
\end{enumerate}
\end{thm}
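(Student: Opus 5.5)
The statement is quoted from \cite{Huang--Wise2024fintiestature}, so here is only a sketch of how I would prove it. I would prove the two implications separately, working in the Bass--Serre tree $\cT$ of the splitting $X_\Gamma$ and using the compactness of $X$ throughout. Since $X$ is compact, $\Gamma$ is finite and each vertex and edge space is compact. Hence each edge group is a convex-cocompact (in particular quasiconvex) subgroup of the adjacent hyperbolic vertex groups, realized by the local isometries $\phi_e^\pm$. The basic translation is the following. A subgroup of the form $G_u\cap C$, with $C$ an intersection of conjugates of vertex groups, is the $G_u$-stabilizer of a finite subtree of $\cT$ containing a lift of $u$. Such a stabilizer is obtained by starting from a vertex group and repeatedly intersecting with images of edge groups transported across edges. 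Cubically, it corresponds to a component of an iterated fiber product of the edge spaces $X_e$ over the vertex spaces.

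For (2)$\Rightarrow$(1), I would pass to a finite special cover $\hX$. Finite stature is insensitive to finite index, since infinite intersections and conjugacy classes change only finitely; so it suffices to treat $\hX$. Next I would apply Lemma~\ref{lemma of interosculation} to the finitely many local isometries $\hX_e\to\hX_u$, passing to a further finite cover in which all elevations of edge spaces are embedded and do not inter-osculate with hyperplanes. In such a cover, each step of transporting across an edge replaces an embedded locally convex subcomplex $Z\subset\hX_u$ by its intersection with an embedded edge-space image, transported isometrically to the neighbouring vertex space. The result is again an embedded locally convex subcomplex. An infinite subgroup $G_u\cap C$ is then conjugate to $\pi_1$ of a component of such a subcomplex of the compact complex $\hX_u$. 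Since a compact complex has only finitely many subcomplexes, there are finitely many $G_u$-conjugacy classes. The point needing care is that intersecting fundamental groups agrees with taking components of intersections of subcomplexes. I would deduce this from embeddedness, convexity of the lifts in the universal cover, and the no-inter-osculation condition, in the spirit of Lemma~\ref{lemma of vertical hyperplane}.

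For (1)$\Rightarrow$(2), I would aim to build a finite cover that satisfies the hypotheses of a Corollary~\ref{cor of special}-type criterion. The difference is that here the vertex spaces are not constant, so I would instead use a general combination criterion in the spirit of the Malnormal Combination Theorem \cite[Theorem~8.2]{haglund-wise2012}. Finite stature provides, for each vertex group, a finite list of infinite ``intersection subgroups'' up to conjugacy; these are quasiconvex, being intersections of quasiconvex subgroups in a hyperbolic group. I would induct on a complexity, for instance the number of conjugacy classes of intersection subgroups ordered by inclusion. At each step I would use separability of quasiconvex subgroups to pass to covers of the vertex spaces in which the elevations of all intersection subcomplexes embed and are mutually ``malnormal'' modulo finite intersections. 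Then I would glue the vertex covers compatibly along edge-space covers, via a canonical-completion type argument, to obtain a finite cover of $X$.

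I expect the main obstacle to be this last step. One must first know that the hyperbolic vertex groups are themselves virtually special, so that quasiconvex subgroups are separable. I would try to derive this by induction on the cubical dimension, applying the hyperplane splitting of each vertex space together with finite stature and Agol's theorem. The second difficulty is making the chosen vertex covers agree on the edge spaces so that they assemble into a finite cover of $X$. I would try to force agreement by choosing all covers to be characteristic with respect to the finitely many intersection subgroups supplied by finite stature.
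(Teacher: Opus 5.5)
The paper does not prove this statement. It quotes Huang--Wise, remarking only that $(2)\Rightarrow(1)$ is their Theorem~6.12 and needs no hyperbolicity. Your sketch therefore has to stand on its own, and both directions have genuine gaps.

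In $(2)\Rightarrow(1)$, the step that fails is the claim that every infinite $G_u\cap C$ is, up to conjugacy, $\pi_1$ of a component of an iterated intersection (or iterated fiber product) of embedded edge-space images. These subgroups are fixators of finite subtrees of $\cT$. Already for two tree edges at $\tilde u_1$, the fixator is the intersection of the stabilizers of two convex lifts of edge spaces in $\widetilde{X}_{\tilde u_1}$. Such lifts can be disjoint while their stabilizers share an infinite subgroup. For example, let $\Lambda$ be a subdivided rose with petals $a,b,c$ and let $X_{u_1}=\Lambda\times[0,1]$. Attach $X_{u_0}=A=a\cup b$ along $A\times\{0\}$ and $X_{u_2}=B=b\cup c$ along $B\times\{1\}$. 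The total space is special, the vertex groups are free, and the attaching maps are embeddings whose images inter-osculate with no hyperplane. Yet the fixator of the corresponding path $\tilde u_0\tilde u_1\tilde u_2$ is $\langle b\rangle$, while $(A\times\{0\})\cap(B\times\{1\})=\emptyset$ in every finite cover. So no infinite subgroup of $\langle b\rangle$ is ever $\pi_1$ of a subcomplex your procedure produces. To repair this, you must handle intersections of stabilizers of disjoint convex lifts (for instance via projections or bridges between convex subcomplexes) and prove finiteness for those; that is the real content of this direction. Separately, Lemma~\ref{lemma of interosculation} applied to $\hX_e\to\hX_u$ only gives covers of the individual vertex spaces, which need not assemble into a cover of $\hX$. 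You should apply it to the vertex spaces and thickened edge spaces mapping into $\hX$ itself.

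For $(1)\Rightarrow(2)$, the proposal omits the heart of the argument, which you flag yourself. A finite cover of $X$ needs finite covers of the vertex spaces whose induced covers of each $X_e$ agree, with multiplicities, from the $\iota(e)$ and $\tau(e)$ sides. Characteristic or otherwise canonical covers do not provide this. A finite-index $K\le G_{\iota(e)}$ induces on $X_e$ the covers corresponding to $(\Phi_e^-)^{-1}(gKg^{-1})$, $g\in G_{\iota(e)}$. A finite-index $K'\le G_{\tau(e)}$ induces those corresponding to $(\Phi_e^+)^{-1}(hK'h^{-1})$, $h\in G_{\tau(e)}$. Nothing relates these two families when the vertex groups are unrelated. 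Correcting one side forces changes on the other, which propagate through $\cT$ and around cycles of $\Gamma$. Finite stature is exactly what must make this propagation terminate, whereas your sketch uses it only to produce a finite list of subgroups.

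Wise's non-residually-finite square complex, recalled in the paper, shows this step carries all the weight. It is a graph of graphs with free vertex groups and quasiconvex edge groups, so all the separability you invoke is available, yet no finite cover is special. Nor can you quote a combination theorem: the Malnormal Combination Theorem needs malnormal edge groups, which is precisely what finite stature replaces. By contrast, the paper sidesteps the matching problem via the locally constant hypothesis. There, the retraction $r\colon X\to X_v$ pulls back a single cover $\hX_v$ to every vertex space at once, which is why Theorem~\ref{cubical thm} needs neither finite stature nor this theorem.

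A minor point: virtual specialness of the vertex spaces needs no induction on dimension. A compact nonpositively curved cube complex with hyperbolic $\pi_1$ is virtually special directly by Agol's theorem.
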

Furthermore, the direction $(2) \Longrightarrow (1)$  follows from \cite[Theorem 6.12]{Huang--Wise2024fintiestature}, which does not reply on the hyperbolicity assumption. Thus, our results of virtual specialness in Theorem \ref{cubical thm} and Corollary \ref{cor of Gromov-hyperbolic} imply the following:
\begin{cor}\label{finite stature 1}
Let $X$ be a compact cube complex that splits as a graph of virtually special cube complexes $X_\Gamma$ with locally constant vertex spaces. Then $\pi_1X$ has finite stature with respect to the vertex groups.
\end{cor}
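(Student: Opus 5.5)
The plan is to combine Theorem~\ref{cubical thm} with the implication $(2)\Longrightarrow(1)$ of Theorem~\ref{relation thm}. As remarked after that theorem, this implication comes from \cite[Theorem 6.12]{Huang--Wise2024fintiestature} and does not use hyperbolicity of the vertex groups.

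The first step is to note that $X$ satisfies the hypotheses of Theorem~\ref{cubical thm} verbatim: it is a compact connected cube complex that splits as a graph of virtually special cube complexes with locally constant vertex spaces. Theorem~\ref{cubical thm} then gives that $X$ is virtually special. Concretely, there is a finite-sheeted special cover $\hX\to X$. It is compact, since $X$ is, and it inherits a splitting $\hX_{\hGamma}$ as a graph of nonpositively curved cube complexes. Its vertex spaces are the elevations of the $X_u$.

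The second step is to apply \cite[Theorem 6.12]{Huang--Wise2024fintiestature}, the direction $(2)\Longrightarrow(1)$ of Theorem~\ref{relation thm}, to $X$ itself. This direction is stated for a compact cube complex splitting as a graph of nonpositively curved cube complexes which is virtually special, so it yields directly that $\pi_1X$ has finite stature with respect to the vertex groups $\{\pi_1X_u\}$. No hyperbolicity is needed in this direction.

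The main point to check is exactly that the cited direction is formulated for $X$ being virtually special, and not only for $X$ special. If it were only available for special complexes, I would apply it to $\hX$ and transfer finite stature down to $X$. The vertex groups of $\hX_{\hGamma}$ are finite-index subgroups $\pi_1\hX\cap g\,\pi_1X_u\,g^{-1}$ of conjugates of the vertex groups of $X$. Intersections of $\pi_1X$-conjugates of vertex groups, once intersected with the finite-index subgroup $\pi_1\hX$, are then intersections of $\pi_1\hX$-conjugates of finitely many subgroups. These are finite-index subgroups of the original intersections.

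From here one checks that finitely many $H$-conjugacy classes persist under passage to finite index. An infinite subgroup $K$ is commensurable with $K\cap\pi_1\hX$. Each such $K$ lies between $K\cap \pi_1\hX$ and a subgroup containing it with bounded index, because $\pi_1\hX$ can be taken normal. This is the one routine but fiddly bookkeeping step. I expect it to be unnecessary given the remark preceding the corollary.
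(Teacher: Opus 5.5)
Your proposal is correct and is essentially the paper's argument. Theorem~\ref{cubical thm} makes $X$ virtually special, and the direction $(2)\Rightarrow(1)$ of Theorem~\ref{relation thm} (which the paper attributes to Huang--Wise's Theorem~6.12, noting it needs no hyperbolicity) then gives finite stature of $\pi_1X$ with respect to the vertex groups. Your fallback passage to $\hX$ is not needed, and as sketched it would need more care, since knowing $K\cap\pi_1\hX$ up to conjugacy does not by itself determine $K$.
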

\begin{cor}\label{finite stature 2}
Let $G$ be the fundamental group of a finite graph of groups with locally constant vertex groups $G_\Gamma$. Suppose that the monodromy group of $G_\Gamma$ is finite.
If the vertex groups are virtually compact special Gromov-hyperbolic, and the edge groups are quasiconvex in adjacent vertex groups, then $G$ has finite stature with respect to the vertex groups.
\end{cor}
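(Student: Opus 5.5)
The statement is Corollary~\ref{finite stature 2}. My plan is to reduce it to the finite stature direction of Theorem~\ref{relation thm}, using Corollary~\ref{cor of Gromov-hyperbolic} as input. There is one subtlety: Corollary~\ref{cor of Gromov-hyperbolic} only produces a finite index compact special subgroup of $G$, whereas Theorem~\ref{relation thm} concerns a compact cube complex that splits as a graph of spaces realising the given splitting of $G$. So I would work with the finite index subgroup built in the proof of Corollary~\ref{cor of Gromov-hyperbolic} and transfer finite stature back up to $G$.

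First, I follow the proof of Corollary~\ref{cor of Gromov-hyperbolic}. Lemma~\ref{lemma: local become global}(2) passes to $\ker(f)$, which contains every vertex group, and the retraction $\rho$ then gives $\hG=\rho^{-1}(\hV)$. This yields a finite index subgroup $\hG\le G$ that is $\pi_1$ of a compact total space $\hX$ of a graph of special cube complexes $\hX_{\hGamma}$ with constant vertex spaces. Here $\hGamma=\hG\backslash\cT$, and the vertex groups of $\hX_{\hGamma}$ are exactly the intersections $\hG\cap G_{u}^{g}$ of $\hG$ with conjugates of vertex groups of $G$. By Theorem~\ref{cubical thm}, $\hX$ is virtually special. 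Since the vertex groups are hyperbolic, the implication $(2)\Rightarrow(1)$ of Theorem~\ref{relation thm} (equivalently \cite[Theorem 6.12]{Huang--Wise2024fintiestature}) shows that $\hG$ has finite stature with respect to its vertex groups $\{\hG\cap G_u^g\}$.

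Second, I transfer finite stature from $(\hG,\{\hG\cap G_u^g\})$ to $(G,\{G_u\})$. Fix a vertex group $G_\mu$ and an intersection $C$ of $G$-conjugates of vertex groups. Since $\hG$ has finite index, $G_\mu\cap C$ is infinite exactly when $\hG\cap G_\mu\cap C$ is infinite. The latter is of the form $\hG_{\hat\mu}\cap \hat C$, where $\hat C$ is an intersection of $\hG$-conjugates of the subgroups $\hG\cap G_u^{g}$, because $G$-conjugates of $G_u$ intersected with $\hG$ are, up to $\hG$-conjugacy, among finitely many such subgroups.

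By finite stature of $\hG$, these subgroups fall into finitely many $\hG_{\hat\mu}$-conjugacy classes. Each $G_\mu\cap C$ contains its intersection with $\hG$ with finite index. It is the stabilizer of a finite subtree-type object; more concretely, $G_\mu\cap C$ is the $G_\mu$-stabilizer of the fixed set of the corresponding intersection in $\cT$. I would then argue that an infinite subgroup of the hyperbolic group $G_\mu$ arising as a finite index overgroup of one of finitely many given subgroups $K$ lies in the commensurator of $K$, which is quasiconvex with finitely many possibilities up to conjugacy. That gives finitely many $G_\mu$-conjugacy classes.

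I expect the main obstacle to be this last commensurability bookkeeping. The cleanest route I would try first is the following. Finite stature is known to be invariant under passing to finite index subgroups with the induced splittings, since both are equivalent to having finitely many orbits of the relevant intersection-fixed subtrees of $\cT$ under the vertex stabilizers, and finite index changes the number of orbits only by a finite factor. If I phrase stature via the Bass--Serre tree in that way, the transfer becomes a direct orbit count. Otherwise I would fall back on the commensurator argument in $G_\mu$.
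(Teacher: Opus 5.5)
Your main line is the paper's own. The paper derives this corollary in one line: it combines Corollary~\ref{cor of Gromov-hyperbolic} (Theorem~\ref{cubical thm} applied to the complex $\hX$ with $\pi_1\hX=\hG=\rho^{-1}(\hV)$) with the implication $(2)\Rightarrow(1)$ of Theorem~\ref{relation thm}. You correctly see what that line leaves implicit. Theorem~\ref{relation thm} only gives finite stature for $\hG$ with respect to its induced vertex groups $\hG\cap G_x$. The group $G$ itself, which may have torsion, need not be $\pi_1$ of a compact cube complex realising the splitting. So a passage from $\hG$ back to $G$ is needed, and that passage is where your proposal has a gap.

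\textbf{The orbit-count route does not work as stated.} Write $x_\mu$ for the vertex of $\cT$ with stabiliser $G_\mu$, and set $K=G_\mu\cap C$ and $\hat K=\hG\cap K$. The subtrees that encode stature for $\hG$ are the sets $\mathrm{Fix}_\cT(\hat K)$. These can be strictly larger than $\mathrm{Fix}_\cT(K)$, and distinct $K$ can have the same $\hat K$. So you are not comparing orbits of one $G_\mu$-set under $G_\mu$ and under a finite-index subgroup. What must be bounded is the number of $K$ lying over a fixed $\hat K$. Formally you only know $\hat K\le K\le\mathrm{Comm}_{G_\mu}(\hat K)$ with $[K:\hat K]\le[G:\hG]$. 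Relatedly, your claim that $K$ is the $G_\mu$-stabiliser of $\mathrm{Fix}_\cT(\hat K)$ is wrong in general. That stabiliser contains $N_{G_\mu}(\hat K)$, whereas $K$ is the pointwise fixator of $\mathrm{Fix}_\cT(K)$.

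\textbf{The commensurator fallback is the right repair, but its key input is missing.} You need $\hat K$ to be an infinite \emph{quasiconvex} subgroup of $G_\mu$. Then $[\mathrm{Comm}_{G_\mu}(\hat K):\hat K]<\infty$ by Kapovich--Short, so only finitely many subgroups lie between $\hat K$ and its commensurator. Combined with the finitely many $\hG_{x_\mu}$-conjugacy classes of $\hat K$, this gives finitely many $G_\mu$-classes of $K$.

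Quasiconvexity holds when $C$ is a finite intersection. In that case $K$ is the fixator of a finite subtree containing $x_\mu$, so it is a finite intersection of fixators of geodesics $[x_\mu,y]$. Each such fixator is quasiconvex in $G_\mu$, by induction along the geodesic using three facts: edge groups are quasiconvex in both adjacent vertex groups, finite intersections of quasiconvex subgroups are quasiconvex, and quasiconvexity is transitive. Hence $K$ is quasiconvex, and so is its finite-index subgroup $\hat K$. If the definition allows infinite intersections, you must also reduce them to finite ones. For example, finite height of quasiconvex subgroups shows that descending chains of such infinite subgroups stabilise.

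With this supplement your argument goes through, and it fills in a step the paper does not spell out.
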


In fact, Corollaries~\ref{finite stature 1} and~\ref{finite stature 2} can be proved directly, without using virtual specialness. Together with Theorem~\ref{relation thm}, this gives an alternative proof of Corollary~\ref{cor of Gromov-hyperbolic}. However, this approach does not prove Theorem~\ref{cubical thm}, because Theorem~\ref{relation thm} assumes that the vertex groups are Gromov-hyperbolic.
\bibliographystyle{alpha}	
\bibliography{ref}

\end{document}